\begin{document}

\newtheorem{theorem}[subsection]{Theorem}
\newtheorem{proposition}[subsection]{Proposition}
\newtheorem{lemma}[subsection]{Lemma}
\newtheorem{corollary}[subsection]{Corollary}
\newtheorem{conjecture}[subsection]{Conjecture}
\newtheorem{prop}[subsection]{Proposition}
\numberwithin{equation}{section}
\newcommand{\mr}{\ensuremath{\mathbb R}}
\newcommand{\mc}{\ensuremath{\mathbb C}}
\newcommand{\dif}{\mathrm{d}}
\newcommand{\intz}{\mathbb{Z}}
\newcommand{\ratq}{\mathbb{Q}}
\newcommand{\natn}{\mathbb{N}}
\newcommand{\comc}{\mathbb{C}}
\newcommand{\rear}{\mathbb{R}}
\newcommand{\prip}{\mathbb{P}}
\newcommand{\uph}{\mathbb{H}}
\newcommand{\fief}{\mathbb{F}}
\newcommand{\majorarc}{\mathfrak{M}}
\newcommand{\minorarc}{\mathfrak{m}}
\newcommand{\sings}{\mathfrak{S}}
\newcommand{\fA}{\ensuremath{\mathfrak A}}
\newcommand{\mn}{\ensuremath{\mathbb N}}
\newcommand{\mq}{\ensuremath{\mathbb Q}}
\newcommand{\half}{\tfrac{1}{2}}
\newcommand{\f}{f\times \chi}
\newcommand{\summ}{\mathop{{\sum}^{\star}}}
\newcommand{\chiq}{\chi \bmod q}
\newcommand{\chidb}{\chi \bmod db}
\newcommand{\chid}{\chi \bmod d}
\newcommand{\sym}{\text{sym}^2}
\newcommand{\hhalf}{\tfrac{1}{2}}
\newcommand{\sumstar}{\sideset{}{^*}\sum}
\newcommand{\sumprime}{\sideset{}{'}\sum}
\newcommand{\sumprimeprime}{\sideset{}{''}\sum}
\newcommand{\sumflat}{\sideset{}{^\flat}\sum}
\newcommand{\shortmod}{\ensuremath{\negthickspace \negthickspace \negthickspace \pmod}}
\newcommand{\V}{V\left(\frac{nm}{q^2}\right)}
\newcommand{\sumi}{\mathop{{\sum}^{\dagger}}}
\newcommand{\mz}{\ensuremath{\mathbb Z}}
\newcommand{\leg}[2]{\left(\frac{#1}{#2}\right)}
\newcommand{\muK}{\mu_{\omega}}
\newcommand{\thalf}{\tfrac12}
\newcommand{\lp}{\left(}
\newcommand{\rp}{\right)}
\newcommand{\Lam}{\Lambda_{[i]}}
\newcommand{\lam}{\lambda}
\def\L{\fracwithdelims}
\def\om{\omega}
\def\pbar{\overline{\psi}}
\def\phis{\varphi^*}
\def\lam{\lambda}
\def\lbar{\overline{\lambda}}
\newcommand\Sum{\Cal S}
\def\Lam{\Lambda}
\newcommand{\sumtt}{\underset{(d,2)=1}{{\sum}^*}}
\newcommand{\sumt}{\underset{(d,2)=1}{\sum \nolimits^{*}} \widetilde w\left( \frac dX \right) }

\newcommand{\hf}{\tfrac{1}{2}}
\newcommand{\af}{\mathfrak{a}}
\newcommand{\Wf}{\mathcal{W}}

\theoremstyle{plain}
\newtheorem{conj}{Conjecture}
\newtheorem{remark}[subsection]{Remark}

\makeatletter
\def\widebreve{\mathpalette\wide@breve}
\def\wide@breve#1#2{\sbox\z@{$#1#2$}%
     \mathop{\vbox{\m@th\ialign{##\crcr
\kern0.08em\brevefill#1{0.8\wd\z@}\crcr\noalign{\nointerlineskip}%
                    $\hss#1#2\hss$\crcr}}}\limits}
\def\brevefill#1#2{$\m@th\sbox\tw@{$#1($}%
  \hss\resizebox{#2}{\wd\tw@}{\rotatebox[origin=c]{90}{\upshape(}}\hss$}
\makeatletter

\title[Upper bounds for shifted moments of quadratic Dirichlet $L$-functions over function fields]{Upper bounds for shifted moments of quadratic Dirichlet $L$-functions over function fields}

%%\date{\today}
\author[P. Gao]{Peng Gao}
\address{School of Mathematical Sciences, Beihang University, Beijing 100191, China}
\email{penggao@buaa.edu.cn}

\author[L. Zhao]{Liangyi Zhao}
\address{School of Mathematics and Statistics, University of New South Wales, Sydney NSW 2052, Australia}
\email{l.zhao@unsw.edu.au}

\begin{abstract}
 We establish bounds on shifted moments of families quadratic Dirichlet $L$-functions over function fields.  These estimates are consistent with their conjectured orders of magnitude.  As an application, we prove some bounds for moments of quadratic Dirichlet character sums over function fields.
\end{abstract}

\maketitle

\noindent {\bf Mathematics Subject Classification (2010)}: 11M38, 11R59, 11T06   \newline

\noindent {\bf Keywords}: quadratic Dirichlet character sums, quadratic Dirichlet $L$-functions, shifted moments, upper bounds

\section{Introduction}
\label{sec 1}

Moments of families of $L$-functions are studied extensively in number theory.  K. Soundararajan \cite{Sound2009} established, assuming the Riemann hypothesis, upper bounds for the $2k$-th moment, with $k>0$, of the Riemann zeta-function.  These bounds were made sharp by A. J. Harper \cite{Harper} using ideas from \cites{Sound2009, Radzi2011}.  For central values of the family of quadratic Dirichlet $L$-functions over $\mq$, asymptotic formulas were given by M. Jutila \cite{Jutila} for the first and the second moments, and by K. Soundararajan \cite{Sound2009} for the third moment. In \cite{Shen}, Q. Shen evaluated the fourth moment asymptotically under the generalized Riemann hypothesis(GRH). An unconditional result was later obtained by Q. Shen and J. Stucky \cite{ShenStucky} using ideas of X. Li \cite{Li2024} in the asymptotic evaluation of the second moment of quadratic twists of modular $L$-functions unconditionally and improving an earlier result, conditional on GRH, of K. Soundararajan and M. P. Young \cite{S&Y}.  Precise conjectural formulations of the asymptotic behaviors of all $k$-th, with $k \in [0, \infty)$, moments are given by J. P. Keating and N. C. Snaith \cite{Keating-Snaith02} and by J. B. Conrey, D. W. Farmer, J. P. Keating, M. O. Rubinstein and N. C. Snaith \cite{CFKRS}.  Related to this, sharp lower (resp. upper) bounds for these $k$-th moments have been established for all $k \geq 0$ (resp. $0 \leq k \leq 2$) unconditionally, by incorporating the results in \cites{Harper, Sound2009, R&Sound, R&Sound1, Radziwill&Sound1, Gao2021-2, Gao2021-3}. Moreover,  sharp upper bounds (see \cite{Harper}) have been established for all $k > 2$ under GRH. \newline

Instead of only focusing on the central values, the result in \cite{ShenStucky} in fact evaluates the shifted fourth moment of the underlying family of $L$-functions.  The study of shifted moments was first conducted in the work of V. Chandee \cite{Chandee} with subsequent improvement by N. Ng, Q. Shen and P.-J. Wong \cite{NgSheWon} and sharpening by M. J. Curran \cite{Curran}.  Here we note that similiar upper bounds were obtained using similar approaches by B. Szab\'o \cite{Szab} and M. J. Curran \cite{Curran} for shifted moments of the family of Dirichlet $L$-functions to a fixed modulus and of the Riemann zeta function, respectively. An extension of Szab\'o's result to the function fields was given in \cite{B&Gao2024}. \newline

In \cite{G&Zhao2024-3}, the authors applied the method in \cites{Sound2009, Harper} to show that, for fixed integer $k\geq 1$, positive real numbers $a_1,\ldots, a_{k}, A$ and a real $k$-tuple $t=(t_1,\ldots ,t_{k})$ with $|t_j|\leq  X^A$ for a large real number $X$, then, under GRH,
\begin{align*}
%%\label{Lprodbounds}
\begin{split}
  & \sumstar_{\substack{(d,2)=1 \\ d \leq X}}\big| L\big( \tfrac12+it_1,\chi^{(8d)} \big) \big|^{a_1} \cdots \big| L\big(\tfrac12+it_{k},\chi^{(8d)}  \big) \big|^{a_{k}} \\
\ll & X(\log X)^{(a_1^2+\cdots +a_{k}^2)/4} \\
& \times \prod_{1\leq j<l \leq k} \Big|\zeta \Big( 1+i(t_j-t_l)+\frac 1{\log X} \Big) \Big|^{a_ia_j/2} \Big|\zeta \Big(1+i(t_j+t_l)+\frac 1{\log X} \Big) \Big|^{a_ia_j/2}\prod_{1\leq j\leq k} \Big|\zeta \Big(1+2it_j+\frac 1{\log X} \Big) \Big|^{a^2_i/4+a_i/2},
\end{split}
\end{align*}
 where $\zeta(s)$ is the Riemann zeta function defined in the rational number field. \newline

The aim of this paper is to extend the above result to the rational function fields setting, in which some earlier work has been done by P. Darbar and G. Maiti \cite{DarMai}.  This will then be applied to bound the moments of quadratic character sums as in \cite{G&Zhao2024-3}.  A brief preface is required before we can state our results. \newline
 
We fix a finite field $\mathbb{F}_{q}$ of odd cardinality $q$ and denote by $A=\mathbb{F}_{q}[T]$ the polynomial ring over $\mathbb{F}_{q}$.  Let $\deg(f)$ be the degree of $f \in A$, and define the norm $|f|$ to be $|f|=q^{\deg(f)}$ for $f\neq 0$ and $|f|=0$ for $f=0$.  We set
\begin{align*}
%%\label{2.6}
\mathcal{H}_{2g+1,q}=& \{\text{$D\in A$ : square-free, monic and $\deg(D)=2g+1$}\}.
\end{align*}

We reserve the symbol $P$ for a monic, irreducible polynomial in $A$ and refer to $P$ as a prime in $A$. For any non-zero monic $D \in A$, the quadratic symbol $\leg {\cdot}{P}$ for any prime $P \in A$ is defined by
\begin{equation*}
%%\label{2.4}
\leg {f}{P}=\Bigg\{
\begin{array}{cl}
0, & \mathrm{if}\ P \mid f,\\
1, & \mathrm{if}\ P \nmid f \ \mathrm{and} \ f \ \mathrm{is \ a \ square \ modulo} \ P,\\
-1, & \mathrm{if}\ P\nmid f \ \mathrm{and} \ f \ \mathrm{is \ not \ a \ square \ modulo} \ P.\\
\end{array}
\end{equation*}
The above definition is then extended multiplicatively to $\leg {\cdot}{D}$ for any non-zero monic $D \in A$.  The quadratic reciprocity law (see \cite[Theorem 3.3]{Rosen02}) asserts that any finite field $\mathbb{F}_{q}$ of odd cardinality $q$ and non-zero monic $C$, $D \in A$ with $(C, D)=1$, 
\begin{align}
\label{quadrecp}
 \leg {C}{D}  = \leg{D}{C}(-1)^{\deg(C)\deg(D)(q-1)/2 }=\leg{(-1)^{\deg(
 D)}D}{C},
\end{align}
 where the last equality above follows from the relation (see \cite[Proposition 3.2]{Rosen02}) that for any element $a \in \mathbb{F}_{q}$,
\begin{align*}
%%\label{quadrecp}
 \leg {a}{C}  = a^{\deg(C)(q-1)/2 }.
\end{align*}
  
  We write $\chi_{D}$ for the quadratic symbol $\leg {D}{\cdot}$ throughout the paper. The $L$-function associated with $\chi_D$ for $\Re(s)>1$ is then defined as
\begin{equation*}
%%\label{2.5}
L(s,\chi_D)=\sum_{\substack{f\in A}}\frac{\chi_D(f)}{|f|^{s}}=\prod_{P}(1-\chi(P)|P|^{-s})^{-1}.
\end{equation*}
  Moreover, let $\zeta_A(s)$ denote the zeta function associated to $A$. \newline

We shall often write $L(s,\chi_D)=\mathcal{L}(u,\chi_D)$ via a change of variables $u=q^{-s}$, where
\begin{align}
\label{Ludef}
  \mathcal{L}(u,\chi_D) = \sum_{f \in A} \chi_D(f) u^{\deg(f)} = \prod_P (1-\chi_D(P) u^{\deg(P)})^{-1}.
\end{align}
Here and after $\prod_P$ denotes the product over all primes in $A$. \newline

Our first result bounds from above the shifted moments of $L(s, \chi_D)$ for $D \in \mathcal{H}_{2g+1,q}$, analogous to \cite[Theorem 1.1]{G&Zhao2024-3}.
\begin{theorem}
\label{t1}
With the notation as above, suppose that $X=q^{2g+1}$. Let $k\geq 1$ be a fixed integer, $a_1,\ldots, a_{k}$ fixed positive real numbers, and $t=(t_1,\ldots ,t_{k})$ be a real $k$-tuple. Then we have for $g$ larger than some constant depending on the $a_j$,
\begin{align}
\label{Lprodbounds}
\begin{split}
 \sum_{D\in\mathcal{H}_{2g+1,q}} & \big | L\big(\tfrac12+it_1,\chi_D \big)\big |^{a_1} \cdots \big |L\big(\tfrac12+it_{k},\chi_D \big)\big |^{a_{k}} \\
& \ll X(\log X)^{(a_1^2+\cdots +a_{k}^2)/4} \prod_{1\leq j<l \leq k} \Big|\zeta_A \Big(1+i(t_j-t_l)+\frac 1{\log X}\Big) \Big|^{a_ja_l/2}\Big|\zeta_A \Big(1+i(t_j+t_l)+\frac 1{\log X} \Big) \Big|^{a_ja_l/2} \\
& \hspace*{2cm} \times \prod_{1\leq j\leq k} \Big|\zeta_A \Big(1+2it_j+\frac 1{\log X}\Big) \Big|^{a^2_j/4+a_j/2}.
\end{split}
\end{align}
Consequently,
\begin{align*}
%%\label{Lprodbounds}
\begin{split}
\sum_{D\in\mathcal{H}_{2g+1,q}} & \big |L\big( \tfrac12+it_1,\chi_D \big)\big |^{a_1} \cdots \big |L\big( \tfrac12+it_{k},\chi_D \big)\big |^{a_{k}} \\
& \ll X(\log X)^{(a_1^2+\cdots +a_{k}^2)/4}  \prod_{1\leq j<l \leq k} \min \Big(\log X, \frac {1}{\overline{\log q |t_j-t_l|}} \Big)^{a_ja_l/2}\min \Big(\log X, \frac {1}{\overline{\log q |t_j+t_l|}} \Big)^{a_ja_l/2} \\
& \hspace*{2cm} \times \prod_{1\leq j\leq k} \min \Big( \log X, \frac {1}{\overline{\log q |2t_j|}} \Big)^{a^2_j/4+a_j/2},
\end{split}
\end{align*}
   where $\overline {\theta}=\min_{n \in \mz} |\theta-2\pi n|$ for any $\theta \in \rear$.  The implied constants above depend on $k$ and $a_j$, but not on $X$ or the $t_j$'s.
\end{theorem}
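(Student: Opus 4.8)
The plan is to run the moment method of Soundararajan \cite{Sound2009}, in the refined form of Harper \cite{Harper}, in the function field setting, following the template of the authors' earlier work \cite{G&Zhao2024-3} together with the function-field techniques developed in \cite{B&Gao2024}. Two features of the function field case are worth noting at the outset. First, by the Riemann Hypothesis for curves (Weil), $\mathcal L(u,\chi_D)$ is a polynomial in $u$ of degree $2g$ all of whose zeros lie on the circle $|u|=q^{-1/2}$, so nothing needs to be assumed — in contrast to \cite{G&Zhao2024-3}, where GRH is invoked. Second, $L(\tfrac12+it,\chi_D)=\mathcal L(q^{-1/2-it},\chi_D)$ is periodic in $t$ with period $2\pi/\log q$, so we may and do assume $|t_j|\le\pi/\log q$ throughout.

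The first step is the function-field analogue of Soundararajan's key lemma. Writing the inverse zeros of $\mathcal L(u,\chi_D)$ as $\sqrt q\,e^{i\theta_{j,D}}$ for $1\le j\le 2g$, combining the identity $\log|L(\tfrac12+it,\chi_D)|=\sum_{j}\log|1-e^{i(\theta_{j,D}-t\log q)}|$ with Soundararajan's Fej\'er-type majorant for $\log|1-e^{i\psi}|$ and the trace formula $\sum_{\deg f=n}\Lambda_A(f)\chi_D(f)=-q^{n/2}\sum_j e^{in\theta_{j,D}}$ yields, for every integer $1\le\ell\le 2g$,
\begin{align*}
\log\big|L(\tfrac12+it,\chi_D)\big|\ \le\ \Re\,\mathcal P_\ell(\chi_D,t)+\tfrac12\Re\,\mathcal Q_\ell(\chi_D,t)+\frac{2g}{\ell+1}+O(1),
\end{align*}
where $\mathcal P_\ell(\chi_D,t)=\sum_{\deg P\le\ell}\chi_D(P)|P|^{-1/2-it}\big(1-\tfrac{\deg P}{\ell+1}\big)$ runs over primes and $\mathcal Q_\ell(\chi_D,t)=\sum_{\deg P\le\ell/2}\chi_D(P)^2|P|^{-1-2it}\big(1-\tfrac{2\deg P}{\ell+1}\big)$ collects their squares; the $\tfrac12\mathcal Q_\ell$ term is what eventually produces the $|\zeta_A(1+2it_j)|^{a_j/2}$ factors. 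To keep the exponent of $\log X$ sharp one cannot simply exponentiate $a_j\Re\,\mathcal P_\ell(\chi_D,t_j)$ and expand: instead, following Harper, one takes $\ell=\lfloor c\log X\rfloor$ for a small enough constant $c$ (so $\ell$ is of order $g$), splits $\{1,\dots,\ell\}$ into a rapidly increasing chain of intervals $\mathcal I_1,\dots,\mathcal I_R$, and on the event that every partial prime sum $\mathcal P_{\mathcal I_s}(\chi_D,t_i)$ is not abnormally large replaces each $\exp\!\big(a_i\Re\,\mathcal P_{\mathcal I_s}(\chi_D,t_i)\big)$ by a Taylor truncation of order chosen so that the Dirichlet polynomials in $\chi_D$ it produces remain short enough for the orthogonality relation below to apply with negligible error; the $D$ lying in the complementary ``bad'' sets are discarded using high-moment estimates for the $\mathcal P_{\mathcal I_s}(\chi_D,t_i)$.

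The arithmetic input throughout is the evaluation, valid for monic $f\in A$ of degree up to a fixed multiple of $g$,
\begin{align*}
\sum_{D\in\mathcal H_{2g+1,q}}\chi_D(f)=
\begin{cases}
\dfrac{X}{\zeta_A(2)}\displaystyle\prod_{P\mid f}\Big(1+\frac1{|P|}\Big)^{-1}+O\big(q^{g+\deg f/2}\big), & \text{if }f=\square,\\[2mm]
O\big(q^{g+\deg f/2}\big), & \text{if }f\neq\square,
\end{cases}
\end{align*}
obtained by detecting square-freeness with M\"obius and evaluating the resulting ``short'' character sums, a standard P\'olya--Vinogradov-type computation over $A$ (cf. \cite{B&Gao2024} and the references there). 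Expanding the product of the truncated exponentials over all $i=1,\dots,k$ and all ranges $s$ and averaging over $D$ via this relation keeps only the perfect squares $f=\square$; the surviving diagonal sum factors over primes, and by Mertens-type estimates over $A$ — namely $\sum_{\deg P\le y}|P|^{-1}=\log y+O(1)$ and, more generally, $\sum_{\deg P\le y}|P|^{-1-i\theta}=\log\zeta_A(1+i\theta+1/y)+O(1)$ — the product over $\deg P\le\ell$ equals
\begin{align*}
X(\log X)^{(a_1^2+\cdots+a_k^2)/4}\prod_{1\le j<l\le k}\big|\zeta_A(1+i(t_j-t_l)+\tfrac1{\log X})\big|^{a_ja_l/2}\big|\zeta_A(1+i(t_j+t_l)+\tfrac1{\log X})\big|^{a_ja_l/2}\prod_{1\le j\le k}\big|\zeta_A(1+2it_j+\tfrac1{\log X})\big|^{a_j^2/4+a_j/2},
\end{align*}
where $a_j^2/4$ comes from the diagonal of $\mathcal P_{\mathcal I_s}(\chi_D,t_j)^2$, $a_j/2$ from the $\tfrac12\mathcal Q_\ell(\chi_D,t_j)$ term, and the cross exponents $a_ja_l/2$ from pairing a prime sum carrying the shift $t_j$ with one carrying $t_l$. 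This proves \eqref{Lprodbounds}. Finally, since $\zeta_A(s)=(1-q^{1-s})^{-1}$, a direct computation gives $\big|\zeta_A(1+i\theta+1/\log X)\big|\asymp\min\big(\log X,\,1/\overline{\theta\log q}\big)$, and substituting this into \eqref{Lprodbounds} yields the second displayed bound of the theorem.

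The main difficulty is the bookkeeping of the middle two paragraphs, and in particular the tension between two requirements: the main term calls for prime sums reaching degrees of order $\log X\asymp g$, whereas the orthogonality relation only controls $\sum_D\chi_D(f)$ up to an error of size about $q^{g+\deg f/2}$, which forces the products of primes appearing in the truncated exponentials to have degree comfortably below $2g$. Calibrating the chain of intervals $\mathcal I_s$, the Taylor truncation orders, and the thresholds defining the good event (with the attendant large-moment estimates for the bad events) so that all of these constraints hold simultaneously — and doing so uniformly in the shifts $t_j$, while carrying the prime-square and cross-shift contributions along — is where essentially all the work lies. The availability of the Riemann Hypothesis unconditionally here only streamlines the first step; it does not alter the combinatorial core of the argument.
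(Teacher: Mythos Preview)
Your proposal is correct and follows essentially the same route as the paper: the Soundararajan--Harper method transported to $\mathbb F_q[T]$, with the pointwise inequality for $\log|L(\tfrac12+it,\chi_D)|$ (the paper's Proposition~\ref{prop-ub}), Harper's graded interval decomposition and Taylor truncations, the quadratic-character orthogonality relation (Lemma~\ref{prsum}) to pick out square $f$, and the Mertens-type evaluation (Lemma~\ref{mertenstype}) to produce the $\zeta_A$ factors with exactly the exponents you describe. One technical wrinkle you gloss over but the paper handles explicitly: in $\mathcal Q_\ell$ the term $\chi_D(P)^2$ is $0$ when $P\mid D$, and the paper absorbs this via the weight $I(D)^{-a}=\prod_{P\mid D}(1-\tfrac1{2|P|})^{-a}$, which is why their orthogonality lemma is stated with that weight; this is harmless but worth carrying along.
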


The special case of $t_j = t_l$ for all $j$, $l$ and $q$ prime was established in \cite[Theorem 2.7]{Florea17-2} with an extra factor $q^{\varepsilon}$ on the right-hand side of \eqref{Lprodbounds}. Moreover, the case of $t_j=0$ and $q$ prime is \cite[Theorem 1.1]{G&Zhao12}.  See also \cites{Andrade&Keating12, Jung, Florea17, Florea17-1, Florea17-2, Diaconu19} for asymptotic evaluations of the $k$-th moment of the family of $L$-functions studied in Theorem \ref{t1} at the central point for $k=1,2,3,4$. \newline

    Note that Theorem \ref{t1} is valid unconditionally as the Riemann hypothesis is true in the function fields setting. Also, our result depends on $\overline{\log q |t_i-t_j|}$ instead of $|t_i-t_j|$ since the $L$-functions $L(\tfrac12+it, \chi_D)$ are periodic functions of $t$. In particular, Theorem \ref{t1} is valid without any restrictions on $t_j$. \newline

  We point out here that in \cite{BerDiaPetWes},  J. Bergström, A. Diaconu, D. Petersen and C. Westerland used a novel approach from algebraic topology to unconditionally establish asymptotic formulas for all moments of central values of quadratic Dirichlet $L$-functions over function fields. One may be able to carry the treatment in \cite{BerDiaPetWes} to the case on shifted moments as well.  Nevertheless, our proof of Theorem \ref{t1} is more direct and simpler. \newline

  We also note that the Density Conjecture of N. Katz and P. Sarnak \cites{KS1, K&S} associates a classical compact group to
 each natural family of $L$-functions. Under this philosophy, the family of quadratic Dirichlet $L$-functions is a symplectic family
 of $L$-functions, whilst the family of Dirichlet $L$-functions to a fixed modulus and the family of the Riemann zeta function on the critical line are all
 unitary families of $L$-functions. Consequently, the upper bounds obtained in \eqref{Lprodbounds} differs from those obtained in \cites{Szab, Curran, B&Gao2024} in the sense that there are extra terms involving with products of various powers of
\begin{align*}
%%\label{Lprodbounds}
\begin{split}
 \Big|\zeta_A \Big(1+i(t_j+t_l)+\frac 1{\log X} \Big) \Big| \quad  \text{and} \quad  \Big|\zeta_A \Big(1+2it_j+\frac 1{\log X}\Big) \Big|, \ 1 \leq j, l \leq k.
\end{split}
\end{align*}

   The appearance of these terms are due to the contribution of the prime squares (see Proposition \ref{prop1} below)  as $\chi_D(P^2) = 1$ for most primes $P \in A$. In the calculations for zeta or Dirichlet $L$-functions however, the prime squares emerge quite differently. For example, in the situation of the zeta function, corresponding to the contribution from $\chi_D(P^2)$ in the symplectic case,  one need to consider averages over primes $p$ of $p^{2it}$ for a fixed $t \in \mr$. Unlike $\chi_D(P^2)$, these terms oscillate rapidly.  Hence their contributions are negligible. \newline
 
 We shall use Theorem \ref{t1} to estimate moments of quadratic Dirichlet character sums. Define, for any real $m \geq 0$,
\begin{align*}
%%\label{genJacobi}
  S_m(q, g,Y) =\sum_{D\in\mathcal{H}_{2g+1,q}} \bigg|\sum_{|f|\leq Y} \chi_D(f)\bigg|^{2m}.
\end{align*}
Here and throughout the paper, we use the convention that when considering a sum over some subset $S$ of $A$, the symbol $\sum_{f \in S}$ stands for a sum over monic $f \in S$, unless otherwise specified. \newline

   Applying arguments similar to those used in the proof of \cite[Theorem 3]{Szab}, we obtain the following bounds on $S_m(q, g, Y)$.
\begin{theorem}
\label{quadraticmean}
With the notation as above, any real $m \geq 3/2$, we have, for $q$ larger than some constant depending on $m$ and $Y \geq 1$,
\begin{align*}
%%\label{mainestimation}
 S_m(q,g,Y) \ll XY^m(\log X)^{2m^2-m+1},
\end{align*}
 where the implied constant depending on $m$ only.
\end{theorem}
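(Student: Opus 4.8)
The plan is to deduce Theorem \ref{quadraticmean} from Theorem \ref{t1} by expressing the character sum $\sum_{|f| \le Y} \chi_D(f)$ as a contour integral of the associated $L$-function. Concretely, since $\mathcal{L}(u, \chi_D) = \sum_f \chi_D(f) u^{\deg f}$ is (for $D \in \mathcal{H}_{2g+1,q}$) a polynomial in $u$, Perron's formula in the function-field setting gives
\begin{align*}
\sum_{\deg f \le N} \chi_D(f) = \frac{1}{2\pi i} \oint_{|u| = r} \frac{\mathcal{L}(u, \chi_D)}{u^{N+1}(1-u)} \, \dif u
\end{align*}
for any radius $r \in (0,1)$, where $N = \lfloor \log_q Y \rfloor$. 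Writing $u = r e^{i\theta}$ and choosing $r = q^{-1/2}$ so that we land on the critical line $\Re(s) = \tfrac12$, we obtain a bound of the shape
\begin{align*}
\Big| \sum_{|f| \le Y} \chi_D(f) \Big| \ll Y^{1/2} \int_{-\pi}^{\pi} \big| L(\tfrac12 + i\vartheta(\theta), \chi_D) \big| \, \frac{\dif\theta}{|1 - q^{-1/2} e^{i\theta}|},
\end{align*}
after a change of variables relating $\theta$ to a shift parameter $t$. The weight $1/|1 - q^{-1/2}e^{i\theta}|$ is integrable and bounded, so this is the function-field analogue of the step in \cite{Szab} where the character sum is controlled by an integral of $|L(\tfrac12 + it, \chi_D)|$ over a period.

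Next I would raise this to the $2m$-th power, apply Hölder's inequality in the integral to pull the $2m$-th power inside (at the cost of a bounded constant depending on $m$, since the integration is over a set of bounded measure with an integrable weight), and then sum over $D \in \mathcal{H}_{2g+1,q}$ and interchange the sum and the integral. This reduces matters to bounding
\begin{align*}
\int \cdots \int \sum_{D \in \mathcal{H}_{2g+1,q}} \big| L(\tfrac12 + it_1, \chi_D) \big|^{2m} \cdots \big| L(\tfrac12 + it_{2m}, \chi_D) \big|^{2m} \; \dif t_1 \cdots \dif t_{2m}
\end{align*}
over a product of bounded intervals, with bounded weights. Here Theorem \ref{t1} applies with $k = 2m$ (rounding up to an integer if needed, or more efficiently applying it with each exponent $a_j$ — one must be slightly careful since Theorem \ref{t1} is stated for a fixed integer $k$ and positive reals $a_j$, so I would take $k$ copies of the single point and exponent $2m$, i.e. all $a_j = 2m$): it yields
\begin{align*}
\sum_{D} \prod_{j=1}^{k} \big| L(\tfrac12 + it_j, \chi_D) \big|^{2m} \ll X (\log X)^{k m^2} \prod_{j < l} \big| \zeta_A(1 + i(t_j - t_l) + \tfrac{1}{\log X}) \big|^{2m^2} \cdots,
\end{align*}
and then the second (min-form) conclusion of Theorem \ref{t1} bounds each zeta factor by $\min(\log X, 1/\overline{\log q|\cdots|})$.

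The main obstacle — and the part requiring genuine care — is the integration of the resulting product of $\zeta_A$ factors over the $t_j$. One must show that
\begin{align*}
\int \cdots \int \prod_{1 \le j < l \le k} \min\Big(\log X, \tfrac{1}{\overline{\log q|t_j - t_l|}}\Big)^{m^2} \min\Big(\log X, \tfrac{1}{\overline{\log q|t_j + t_l|}}\Big)^{m^2} \prod_{j} \min\Big(\log X, \tfrac{1}{\overline{\log q|2t_j|}}\Big)^{m^2/... } \; \dif t
\end{align*}
contributes only a bounded power of $\log X$. The key mechanism is that $\int \min(\log X, 1/\overline{\log q|t|})^{\beta} \, \dif t$ over a period is $O(1)$ for any fixed $\beta < 1$ but grows like a power of $\log X$ when $\beta \ge 1$; since here the relevant exponents are $\ge 1$ for $m \ge 3/2$, one extracts, for each of the $k = 2m$ variables, at most one "diagonal" singularity, picks up a factor $(\log X)^{\text{something}}$, and the cross terms between distinct pairs are handled by a greedy/pigeonhole argument (as in \cite{Szab}): near any point in $t$-space only boundedly many of the $\overline{\log q|t_j \pm t_l|}$ can be simultaneously small. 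Tallying the exponents: the main term $(\log X)^{km^2} = (\log X)^{2m \cdot m^2}$ from Theorem \ref{t1}, after integrating against the zeta factors and combining with the $Y^{1/2 \cdot 2m} = Y^m$ from Perron, must collapse down to $(\log X)^{2m^2 - m + 1}$ — this bookkeeping, matching the exponent precisely, is where I expect to have to reproduce the combinatorial optimization of \cite{Szab} carefully, tracking how the $\binom{k}{2}$ pairs and the diagonal $\zeta_A(1 + \tfrac{1}{\log X})$ poles interact. Everything else (Perron, Hölder, interchanging sum and integral) is routine.
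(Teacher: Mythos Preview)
Your opening move---Perron's formula followed by shifting the contour to $|u|=q^{-1/2}$ to land on the critical line, giving
\[
\Big|\sum_{|f|\le Y}\chi_D(f)\Big| \ll Y^{1/2}\int_0^{2\pi}\big|\mathcal L(e^{i\theta}/\sqrt q,\chi_D)\big|\,\dif\theta,
\]
matches the paper exactly. The trouble begins at the next step.

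\textbf{The H\"older shortcut loses.} If you ``pull the $2m$-th power inside'' by H\"older, you are left with a single integral of $|L(\tfrac12+it,\chi_D)|^{2m}$. Applying Theorem~\ref{t1} with $k=1$, $a_1=2m$ then gives
\[
\sum_{D}\big|L(\tfrac12+it,\chi_D)\big|^{2m}\ll X(\log X)^{m^2}\min\Big(\log X,\tfrac{1}{\overline{|2t|\log q}}\Big)^{m^2+m},
\]
and integrating this over a period produces $X(\log X)^{2m^2+m-1}$. For every $m>1$ this is strictly larger than the target exponent $2m^2-m+1$; you lose a factor of $(\log X)^{2m-2}$. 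So the direct H\"older route genuinely fails to give the theorem as stated.

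\textbf{The displayed multiple integral is inconsistent.} Your expression $\int\!\cdots\!\int\sum_D\prod_{j=1}^{2m}|L(\tfrac12+it_j,\chi_D)|^{2m}$ has total exponent $(2m)^2$ and arises neither from H\"older (which yields one variable) nor from expanding $(\int|L|)^{2m}$ as a product (which yields $2m$ variables each with exponent $1$). And even the latter expansion only makes sense for integer $2m$, which is not assumed.

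\textbf{What the paper actually does.} The paper uses a hybrid. It fixes an integer $k$ (eventually $k=2$, so that $2m\ge k+1$ covers all $m\ge 3/2$), expands only $k$ of the copies, and for the remaining $(2m-k)$-th power restricts the inner variable $v$ to the ordered region
\[
\mathcal D(t_1,\dots,t_k)=\{v:|t_1-v|\le|t_2-v|\le\cdots\le|t_k-v|\},
\]
before applying H\"older there. This ordering is the key structural input: it forces $|t_i-v|\gg$ the largest of the previous gaps, so after a dyadic decomposition of $v-t_1$ and of the increments $|t_{i+1}-v|-|t_i-v|$ into annuli of width $\asymp e^{l}/\log X$, the bounds $w(|t_i\pm v|)\ll (\log X)e^{-\max(l_0,\dots,l_{i-1})}$ fall out. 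One then applies Theorem~\ref{t1} with $k+1$ points and exponents $(1,\dots,1,2m-k)$; the resulting sum over dyadic scales $(l_0,\dots,l_{k-1})$ converges, and the exponents combine \emph{exactly} to $2m^2-m+1$. The ``greedy/pigeonhole'' heuristic you sketch is not what is used; it is this ordered-region plus dyadic-annulus device that does the work, and it is precisely the content you deferred.
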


\section{Preliminaries}
\label{sec 2}

\subsection{Backgrounds on function fields}
\label{sec 2.1}

We cite a few facts about function fields in this section.  Most of these can be found in \cite{Rosen02}. \newline

For $\Re(s)>1$,
\begin{equation*}
%%\label{2.1}
\zeta_A(s)=\sum_{\substack{f\in A}}\frac{1}{|f|^{s}}=\prod_{P}(1-|P|^{-s})^{-1},
\end{equation*}
  As there are $q^n$ monic polynomials of degree
$n$, it follows that
\begin{equation*}
%%\label{zetaA}
\zeta_A(s)=\frac{1}{1-q^{1-s}}.
\end{equation*}
 This extends $\zeta_A(s)$ to be defined on the entire complex plane with a simple pole at $s = 1$. Also, as a special case of \eqref{Ludef}, we have
$\zeta_{A}(s)=\mathcal{Z}(u)$ with $u=q^{-s}$ and
\begin{align}
\label{Zudef}
\mathcal{Z}(u)=\prod_{P}(1-u^{\deg(P)})^{-1}=(1-qu)^{-1}.
\end{align}

Recall that $\chi_{D}$ stands for the quadratic symbol $\leg {D}{\cdot}$.  It is shown in \cite[Section 2.1]{Florea17-2} that the associated $L$-function $\mathcal{L}(u,\chi_D)$ is a polynomial in $u$ of degree at most $\deg(D)-1$ when $D$ is monic and $D \neq \square$,  where $\square$ stands for a perfect square in $A$.  Hence $\mathcal{L}(u,\chi_D)$ is defined on the entire complex plane.  Moreover, from \cite[(2.2)]{Florea17-2}, for $D \in \mathcal{H}_{2g+1,q}$,
\begin{align}
\label{LchiDdef}
 \mathcal{L}(u,\chi_D) = \prod_{j=1}^{2g} \left( 1-\alpha_j q^{1/2}u \right) .
\end{align}
 where $|\alpha_j|=1$, using the Riemann hypothesis in functions fields proved by A. Weil \cite{Weil}.

\subsection{Estimates of various sums} We begin with some formulas from \cite[Lemma 2.2]{G&Zhao12}.
\begin{lemma}
\label{RS}
   For $x \geq 2$ and some constant $b$, we have
\begin{align}
\label{logp}
\sum_{|P| \le x} \frac {\log |P|}{|P|} =& \log x + O(1),  \; \mbox{and} \\
\label{lam2p}
\sum_{|P| \le x} \frac{1}{|P|} =& \log \log x + b+ O\left( \frac {1}{\log x} \right).
\end{align}
\end{lemma}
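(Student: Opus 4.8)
The plan is to reduce both estimates to the Prime Polynomial Theorem together with elementary sums over the integers. Set $N=\lfloor\log_q x\rfloor$, so that for a prime $P\in A$ the condition $|P|\le x$ is equivalent to $\deg(P)\le N$, and note $N\log q=\log x+O(1)$ since $q$ is fixed. Let $\pi_q(n)$ denote the number of monic irreducibles of degree $n$ in $A$. M\"obius inversion of the identity $q^n=\sum_{d\mid n}d\,\pi_q(d)$ gives $n\,\pi_q(n)=\sum_{d\mid n}\mu(n/d)q^{d}$, and bounding the terms with $d\le n/2$ trivially yields the only input we need,
\[
n\,\pi_q(n)=q^n+O\bigl(q^{n/2}\bigr),
\]
the implied constant depending only on $q$. (This is where the Riemann Hypothesis for curves over finite fields would enter in its sharp form, but the crude bound above suffices here.)

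For \eqref{logp}, since $\log|P|=\deg(P)\log q$ and $|P|=q^{\deg(P)}$, grouping primes by degree gives
\[
\sum_{|P|\le x}\frac{\log|P|}{|P|}=\sum_{n=1}^N\frac{n\,\pi_q(n)\log q}{q^n}=\log q\sum_{n=1}^N\frac{q^n+O(q^{n/2})}{q^n}=\log q\sum_{n=1}^N\bigl(1+O(q^{-n/2})\bigr)=N\log q+O(1),
\]
where the geometric error sum is absolutely convergent. By the choice of $N$ the right-hand side is $\log x+O(1)$, which is \eqref{logp}.

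For \eqref{lam2p}, the same grouping gives
\[
\sum_{|P|\le x}\frac1{|P|}=\sum_{n=1}^N\frac{\pi_q(n)}{q^n}=\sum_{n=1}^N\frac1n+\sum_{n=1}^N e_n,\qquad e_n:=\frac{\pi_q(n)}{q^n}-\frac1n=O\Bigl(\frac{q^{-n/2}}{n}\Bigr).
\]
Since $\sum_n e_n$ converges absolutely, $\sum_{n=1}^N e_n=E+O(q^{-N/2})$ for a constant $E$, while $\sum_{n=1}^N 1/n=\log N+\gamma+O(1/N)$ by the standard harmonic sum estimate. As $N=\lfloor\log_q x\rfloor$ one has $\log N=\log\log_q x+O(1/\log x)=\log\log x-\log\log q+O(1/\log x)$ and $1/N=O(1/\log x)$; collecting the constants into $b:=\gamma+E-\log\log q$ gives \eqref{lam2p}. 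The argument is essentially bookkeeping, and no real obstacle arises: the only point demanding care is the passage between the real variable $x$ and the integer $N=\lfloor\log_q x\rfloor$, namely that replacing $N\log q$ by $\log x$ and $\log\log_q x$ by $\log\log x$ costs only $O(1)$ and $O(1/\log x)$ respectively because $q$ is held fixed.
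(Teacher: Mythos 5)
Your proof is correct: the paper itself does not prove this lemma but simply cites \cite{G&Zhao12}, and your argument via the Prime Polynomial Theorem (obtained from M\"obius inversion of $q^n=\sum_{d\mid n}d\,\pi_q(d)$) followed by grouping primes by degree is exactly the standard derivation of these function-field Mertens estimates. The only point worth flagging is the trivial range $2\le x<q$, where the sums are empty; there the asserted error terms still hold because $1/\log x\gg_q 1$, so nothing further is needed.
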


  The next result is established in \cite[Lemma 2.4]{B&Gao2024}, which generalizes \eqref{lam2p} and is analogous to \cite[Lemma 3.2]{Kou}, \cite[Lemma 2.6]{Curran} and \cite[Lemma 2]{Szab} in the number fields setting.
\begin{lemma}
\label{mertenstype}
Let $\alpha>0$.  Then for $x \geq 2$,
\begin{align}
\label{sumcosp}
\begin{split}
\sum_{|P|\leq x} \frac{\cos(\alpha \log |P|) }{|P|}=& \log \Big| \zeta_A \Big( 1+ \frac{1}{\log x} +i\alpha \Big) \Big|+O(1) = \log \min \Big( \log x, \frac {1}{\overline {\alpha \log q}} \Big)+O(1).
\end{split}
\end{align}
\end{lemma}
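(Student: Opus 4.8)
The plan is to prove the two asserted equalities in \eqref{sumcosp} separately, starting from the prime sum and working toward the logarithm of $\zeta_A$, and then identifying the size of that logarithm. First I would rewrite $\cos(\alpha \log|P|) = \Re\, |P|^{-i\alpha} = \Re\, q^{-i\alpha \deg(P)}$, so that
\[
\sum_{|P|\le x}\frac{\cos(\alpha\log|P|)}{|P|} = \Re \sum_{\deg(P)\le n}\frac{q^{-i\alpha\deg P}}{q^{\deg P}},
\]
where $n = \lfloor \log x/\log q\rfloor$. On the other side, taking the logarithm of the Euler product \eqref{Ludef}/\eqref{Zudef} for $\zeta_A$ at $s = 1 + \tfrac{1}{\log x} + i\alpha$, i.e. at $u = q^{-s}$, gives
\[
\log\zeta_A(s) = \sum_P \sum_{m\ge 1}\frac{u^{m\deg P}}{m} = \sum_P \frac{u^{\deg P}}{1} + \Big(\text{prime-power terms, }O(1)\Big),
\]
the prime-power tail being absolutely bounded uniformly in $\alpha$ since $|u|\le q^{-1}$. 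So the first equality in \eqref{sumcosp} reduces to showing that the partial sum $\Re\sum_{\deg P\le n} q^{-i\alpha\deg P}/q^{\deg P}$ agrees, up to $O(1)$, with $\Re\sum_{P}(q^{-s})^{\deg P}$ where $s = 1 + 1/\log x + i\alpha$. This is a standard Rankin-type / partial-summation comparison: the extra damping factor $q^{-\deg P/\log x}$ is $1 + O(\deg P/\log x)$ on the range $\deg P \le n$, and one controls the difference using \eqref{logp} (or directly the prime polynomial theorem, that there are $\sim q^d/d$ primes of degree $d$), while the tail $\deg P > n$ contributes $O(1)$ because $\sum_{d>n} q^{-d/\log x}\cdot(q^d/d)\cdot q^{-d} = \sum_{d>n} q^{-d/\log x}/d$ is bounded. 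Since $\log|\zeta_A(s)| = \Re\log\zeta_A(s)$, this yields the first equality.

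For the second equality I would use the explicit formula $\zeta_A(s) = (1-q^{1-s})^{-1}$ from the excerpt, so with $s = 1 + \tfrac1{\log x} + i\alpha$,
\[
\big|\zeta_A(s)\big| = \big|1 - q^{-1/\log x}\,q^{-i\alpha}\big|^{-1} = \big|1 - q^{-1/\log x} e^{-i\alpha\log q}\big|^{-1}.
\]
Writing $\delta = 1 - q^{-1/\log x} \asymp 1/\log x$ (for $x\ge 2$, $\delta \in (0, 1)$ with $\delta \asymp \log q/\log x$, absorbed into the $O(1)$ after taking $\log$) and $\theta = \alpha\log q$, we have $|1 - (1-\delta)e^{-i\theta}|^2 = \delta^2 + 2(1-\delta)(1-\cos\theta) = \delta^2 + 4(1-\delta)\sin^2(\theta/2)$. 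Hence $|\zeta_A(s)|^{-1} \asymp \big(\delta + |\sin(\theta/2)|\big)^{-1} \asymp \min\big(\delta^{-1}, |\sin(\theta/2)|^{-1}\big)$, and since $|\sin(\theta/2)| \asymp \overline{\theta} = \overline{\alpha\log q}$ (the distance from $\theta$ to the nearest multiple of $2\pi$) and $\delta^{-1}\asymp \log x$ up to a $\log q$ factor, taking logarithms gives $\log|\zeta_A(s)| = \log\min(\log x, 1/\overline{\alpha\log q}) + O(1)$, as claimed. One should note the degenerate case $\overline{\alpha\log q} = 0$ (i.e. $\alpha\log q \in 2\pi\mathbb{Z}$), where $|\zeta_A(s)|^{-1} \asymp \delta^{-1}\asymp \log x$ and the minimum correctly returns $\log x$.

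The main obstacle is the first equality: the passage from the sharply truncated Dirichlet-type sum $\sum_{\deg P\le n} q^{-i\alpha\deg P - \deg P}$ to the full series $\log\zeta_A(s)$ with its smooth damping at $u = q^{-1-1/\log x-i\alpha}$, done with an error that is genuinely $O(1)$ \emph{uniformly in $\alpha$}. The worry is cancellation in the oscillating factor $q^{-i\alpha d}$: one cannot bound things term-by-term in absolute value without losing the uniformity (that would give $\log\log x$, not $O(1)$). The clean way around this is to avoid estimating the oscillation at all — compare the two sums \emph{before} taking real parts by writing their difference as $\sum_{d\le n} c_d\big(q^{-d/\log x}-1\big)(q^{-i\alpha d})$ plus a tail, where $c_d = \pi_A(d)/q^d = 1/d + O(q^{-d/2})$ by the prime polynomial theorem; the first piece is $O\big(\sum_{d\le n} (d/\log x)\cdot(1/d)\big) = O(n/\log x) = O(1)$ and the tail is $O(1)$ as above, both now bounded trivially in absolute value since the smallness comes from $d/\log x$ and from geometric decay, not from the oscillation. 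This is exactly the mechanism in \cite[Lemma 2.4]{B&Gao2024} and the analogues cited, so I would largely follow that template, inserting the function-field prime count in place of the classical prime number theorem.
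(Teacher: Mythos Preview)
The paper does not prove this lemma at all: it simply cites \cite[Lemma 2.4]{B&Gao2024} (and the number-field analogues in \cite{Kou}, \cite{Curran}, \cite{Szab}) and moves on. Your sketch is therefore not reproducing the paper's argument but supplying one, and the argument you outline is correct and is essentially the standard one from those references. The first equality is handled exactly as you say: the $m\ge 2$ terms in $\log\zeta_A(s)=\sum_P\sum_{m\ge1}u^{m\deg P}/m$ are $O(1)$ since $|u|\le q^{-1}$, and the comparison of the sharply truncated sum with the damped full sum is done termwise in absolute value using $|q^{-d/\log x}-1|\ll d\log q/\log x$ on $d\le n$ together with $\pi_A(d)/q^d=1/d+O(q^{-d/2})$, and geometric decay on the tail $d>n$; no cancellation from the phase $q^{-i\alpha d}$ is needed, which is why the bound is uniform in $\alpha$. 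The second equality is the routine computation with the explicit formula $\zeta_A(s)=(1-q^{1-s})^{-1}$ that you carry out.

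One small slip: you write ``Hence $|\zeta_A(s)|^{-1} \asymp \big(\delta + |\sin(\theta/2)|\big)^{-1}$'', but $|\zeta_A(s)|^{-1}=|1-(1-\delta)e^{-i\theta}|\asymp \delta+|\sin(\theta/2)|$, so it is $|\zeta_A(s)|$ (not its reciprocal) that is $\asymp\big(\delta+|\sin(\theta/2)|\big)^{-1}\asymp\min(\delta^{-1},|\sin(\theta/2)|^{-1})$. The subsequent conclusion is unaffected.
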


  We define for any $f \in A$,
\begin{align*}
%%\label{Addef}
\begin{split}
  I(f)=\prod_{\substack{P|f }}\Big(1-\frac {1}{2|P|}\Big).
\end{split}
\end{align*}
As usual, throughout the paper that any empty product is defined to be $1$ and the empty sum $0$.  We observe that $E(f)>0$ for any $f$. \newline

   Now we have the following result for quadratic character sums.
\begin{lemma}
\label{prsum}
 With the notation as above and $X=q^{2g+1}$, we have, for any monic $f \in A$,
\begin{equation} \label{sumAd}
\begin{split}
\sum_{\substack{D\in\mathcal{H}_{2g+1,q} }} & I(D)^{-k}\left(\frac{D}{f}\right) \\
=& \delta_{f=\square} X\prod_{P}(1-|P|^{-1})(1+I(P)^{-k}|P|^{-1})\times \prod_{P|f}(1+
I(P)^{-k}|P|^{-1})^{-1}+
O_k(X^{1/2+\varepsilon}|f|^{\varepsilon}), \\
\end{split}
\end{equation}
and
\begin{equation} \label{meancharsum}
\sum_{\substack{D\in\mathcal{H}_{2g+1,q} }}\left(\frac{D}{f}\right)=\delta_{f=\square}\Big (\frac{X}{\zeta_{A}(2)} \Big )\prod_{\substack{P\mid
f}}\left(\frac{|P|}{|P|+1}\right)+O\left(X^{1/2}|f|^{1/4}\right).
\end{equation}
Here $\delta_{f=\square}=1$ if $f$ is a square and $\delta_{f=\square}=0$ otherwise.
\end{lemma}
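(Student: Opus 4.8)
Both estimates rest on the same mechanism, and \eqref{meancharsum} is essentially the classical mean value of quadratic characters over $\mathcal{H}_{2g+1,q}$ (compare \cite{Florea17-2, G&Zhao12}); the plan is to recall it first and then bootstrap to the weighted statement \eqref{sumAd}. For \eqref{meancharsum} I would detect the squarefree condition by $\mathbbm{1}_{D\text{ squarefree}}=\sum_{B^{2}\mid D}\mu(B)$, interchange to obtain $\sum_{D\in\mathcal{H}_{2g+1,q}}\leg{D}{f}=\sum_{(B,f)=1,\,\deg B\le g}\mu(B)\,\Psi_f(2g+1-2\deg B)$ with $\Psi_f(N):=\sum_{\deg C=N}\leg{C}{f}$ (monic $C$), and then split on whether $f$ is a perfect square. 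If $f=\square$ then $\leg{\cdot}{f}$ is the trivial character restricted to $(C,f)=1$, so $\Psi_f(N)$ is the $u^{N}$-coefficient of $\mathcal{Z}(u)\prod_{P\mid f}(1-u^{\deg P})$, whose only singularity in $|u|<1$ is the simple pole of $\mathcal{Z}(u)=(1-qu)^{-1}$ at $u=q^{-1}$; reading off the residue and completing the $B$-sum gives the claimed main term with an admissible error $O(X^{1/2})$. If $f\ne\square$ then $\leg{\cdot}{f}$ is induced by a primitive quadratic character modulo the radical of $f$, so by \eqref{LchiDdef} the generating series $\sum_{N}\Psi_f(N)u^{N}=\mathcal{L}\bigl(u,\leg{\cdot}{f}\bigr)$ is a polynomial of degree $<\deg f$ with inverse roots of modulus $q^{1/2}$; hence $\Psi_f(N)=0$ for $N\ge\deg f$, and combined with the convexity bound $|\Psi_f(N)|\ll q^{N/2}|f|^{1/4+\varepsilon}$ (from \eqref{LchiDdef} and the functional equation) this yields the error term in \eqref{meancharsum} after summing over $B$.

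For \eqref{sumAd} the extra input is the multiplicativity of $D\mapsto I(D)^{-k}$ on squarefree arguments, which lets me write $I(D)^{-k}=\sum_{e\mid D}h(e)$ with $h$ multiplicative, supported on squarefree moduli, and $h(P)=\bigl(1-\tfrac{1}{2|P|}\bigr)^{-k}-1\ll_{k}|P|^{-1}$, so $h(e)\ll_{k}|e|^{\varepsilon-1}$. Inserting this, and writing each admissible $D\in\mathcal{H}_{2g+1,q}$ with $e\mid D$ as $D=eD'$ ($D'$ squarefree, $(D',e)=1$), reduces the left side of \eqref{sumAd} to $\sum_{e}h(e)\leg{e}{f}\,T_f(2g+1-\deg e;e)$, where $T_f(N;e):=\sum_{D'\text{ squarefree},\,\deg D'=N,\,(D',e)=1}\leg{D'}{f}$ is the same kind of sum treated above — the coprimality with $e$ being removed by one further Möbius inversion over $b\mid e$ — so that, in the case $f=\square$, it equals $q^{N}(1-q^{-1})\prod_{P\mid ef}(1+|P|^{-1})^{-1}$ plus an error bounded via \eqref{LchiDdef}.

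Summing the $f=\square$ main terms against $h(e)\leg{e}{f}=h(e)\mathbbm{1}_{(e,f)=1}$ produces $X(1-q^{-1})\prod_{P\mid f}(1+|P|^{-1})^{-1}$ times the absolutely convergent Euler product $\sum_{(e,f)=1}h(e)q^{-\deg e}\prod_{P\mid e}(1+|P|^{-1})^{-1}=\prod_{P\nmid f}\dfrac{|P|+I(P)^{-k}}{|P|+1}$; a short rearrangement using $\prod_{P}(1-|P|^{-2})=\mathcal{Z}(q^{-2})^{-1}=1-q^{-1}$ shows this equals $\prod_{P}(1-|P|^{-1})(1+I(P)^{-k}|P|^{-1})\prod_{P\mid f}(1+I(P)^{-k}|P|^{-1})^{-1}$, which is exactly the main term asserted in \eqref{sumAd}. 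The accompanying error terms, summed over $e$ using the fast decay of $h$ (the series $\sum_{e}|h(e)|\,|e|^{\varepsilon-1/2}$ converges), contribute $O_{k}(X^{1/2+\varepsilon}|f|^{\varepsilon})$.

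I expect the residue computations and the collapse of the Euler products to be routine. The one genuinely delicate point is keeping the error uniform in $f$ down to $|f|^{\varepsilon}$: this forces me to exploit \emph{both} the exact vanishing $\Psi_f(N)=0$ for $N\ge\deg f$ (the polynomial nature of $\mathcal{L}(u,\leg{\cdot}{f})$ coming from \eqref{LchiDdef}) \emph{and} the convexity bound for $\mathcal{L}(u,\leg{\cdot}{f})$ on the critical circle $|u|=q^{-1/2}$, and to ensure that the summations over the auxiliary variables $B$, $b$, $e$ do not reintroduce a positive power of $|f|$; that is where the bulk of the technical work lies.
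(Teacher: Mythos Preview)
Your approach is genuinely different from the paper's and, for the main terms, works cleanly. The paper does not unfold $I(D)^{-k}$ as a convolution at all: it writes the whole sum as the coefficient of $u^{2g+1}$ in the Euler product $\prod_P\bigl(1+I(P)^{-k}\leg{P}{f}u^{\deg P}\bigr)$, factors this as $\mathcal{L}\bigl(u,\leg{\cdot}{f}\bigr)Q_f(u)$ with $Q_f$ holomorphic in $|u|<1$, and shifts the contour from small $|u|$ to $|u|=q^{-1/2-\varepsilon}$, picking up the pole at $u=q^{-1}$ exactly when $f=\square$. Your route via $I(D)^{-k}=\sum_{e\mid D}h(e)$ with $h(P)=I(P)^{-k}-1\ll_k|P|^{-1}$ is more elementary, makes the reduction to the unweighted sum \eqref{meancharsum} transparent, and your Euler-product collapse to the stated main term is correct.

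The one point that does not go through as written is the error term in \eqref{sumAd}. You invoke the \emph{convexity} bound $|\Psi_f(N)|\ll q^{N/2}|f|^{1/4+\varepsilon}$ together with the vanishing $\Psi_f(N)=0$ for $N\ge\deg f$, and assert this yields $O_k(X^{1/2+\varepsilon}|f|^{\varepsilon})$. It does not: summing the convexity bound over the $O(\deg f)$ non-vanishing values of $N$ (and over $B,b,e$) still leaves a factor $|f|^{1/4}$, and no amount of exploiting the polynomial cutoff improves that exponent. What is actually needed is the \emph{Lindel\"of} bound $\bigl|\mathcal{L}\bigl(u,\leg{\cdot}{f}\bigr)\bigr|\ll_\varepsilon|f|^{\varepsilon}$ on $|u|=q^{-1/2-\varepsilon}$, which in the function-field setting follows unconditionally from Weil's theorem (this is precisely the content of \eqref{LchiDdef}, via quadratic reciprocity to pass from $\leg{\cdot}{f}$ to $\chi_{f_1}$ for the squarefree kernel $f_1$; the paper cites \cite[Theorem 3.3]{AT14} for the explicit form). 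With Lindel\"of in hand, the vanishing is irrelevant and your $e$-sum converges to give the stated error. So the fix is simply to replace ``convexity'' by ``Lindel\"of (from RH)'' --- but as stated, your claimed mechanism for reaching $|f|^{\varepsilon}$ is not the right one.
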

\begin{proof}
  The formula in \eqref{meancharsum} follows from \cite[Lemma 4.1]{Andrade16}.  This it remains to prove \eqref{sumAd}.  Let $\mu(f)$ be the M\"obius function in $A$.  An application of Cauchy's residue theorem gives that, for $r$ small enough,
\begin{align}
\label{sumdint}
\begin{split}
\sum_{\substack{D\in\mathcal{H}_{2g+1,q} }}I(D)^{-k}\left(\frac{D}{f}\right)=&\sum_{\substack{\deg(D)=2g+1 }}\mu^2(D)I(D)^{-k}\left(\frac{D}{f}\right) \\
=&  \frac 1{2\pi i } \int\limits_{|u|=r} \sum_{\substack{D}}\mu^2(D)I(D)^{-k}\left(\frac{D}{f}\right) \frac {u^{\deg(D)} \dif u}{u^{2g+2}}.
\end{split}
\end{align}

Now, the multiplicativity of the summands gives
\begin{align*}
 \sum_{\substack{D}}\mu^2(D)I(D)^{-k}\left(\frac{D}{f}\right)u^{\deg(D)}=\prod_{P} \left( 1+I(P)^{-k}\leg {P}{f}u^{\deg(P)} \right)=\mathcal{L} \left( u, \leg {\cdot}{f} \right)Q_f(u),
\end{align*}
  where $\mathcal{L} \left( u, \leg {\cdot}{f} \right)$ is defined similar to that in \eqref{Ludef} and where
\begin{align*}
 Q_f(u)=\prod_{P} \left( 1-\leg {P}{f}u^{\deg(P)} \right) \left( 1+I(P)^{-k}\leg {P}{f}u^{\deg(P)} \right).
\end{align*}
  Note that $I(P)^{-k}=1+O(\frac {1}{|P|})$ so that 
  $$Q_f(u)=\prod_{P}\Big(1+O\Big( \frac {u^{\deg(P)}}{|P|} \Big)\Big).$$  
  The above then implies that $Q_f(u)$ is analytic in the region $|u|<1$. \newline

  We now shift the contour of integration in \eqref{sumdint} to $|u|=q^{-1/2-\varepsilon}$. We encounter no pole in this process when $f \neq \square$ and a simple pole at $u=1/q$ when
 $f=\square$. Note that it is immediate from \eqref{Zudef} that the residue of $\mathcal{Z}(u)$ at $u=1/q$ equals $-1/q$.  This allows us to deduce that the contribution of the residue due to the shifting of contour is
\begin{align*}
 X\prod_{P}(1-|P|^{-1})(1+I(P)^{-k}|P|^{-1}) \times \prod_{P|f}(1+I(P)^{-k}|P|^{-1})^{-1},
\end{align*}

Furthermore, it follows from the proof of \cite[Theorem 3.3]{AT14} (see also \cite[p. 6144]{Florea17}) that for a square-free $h \in A$ of degree $2d$ or $2d+1$, for any $|u|<q^{-1/2}$ and any $\varepsilon>0$,
\begin{align*}
 \mathcal{L}(u, \chi_h) \leq  e^{2d/\log_q(d)+4\sqrt{qd}} \ll_{\varepsilon} |h|^{\varepsilon}.
\end{align*}

We then write $(-1)^{\deg(f)}f=f_1f^2_2$ with $f_1$ square-free and apply the quadratic reciprocity law \eqref{quadrecp} to see that for any $|u|<1/2$ and any $\varepsilon>0$,
\begin{align*}
 \mathcal{L} \left( u, \leg {\cdot}{f} \right)=\mathcal{L} \left( u, \leg {(-1)^{\deg(f)}f}{\cdot} \right)=\mathcal{L}(u, \chi_{f_1})\prod_{P | f_2}(1-\chi_{f_1}(P)u^{\deg(P)}) \ll d_A(f_2)|f_1|^{\varepsilon} \ll_{\varepsilon} |f|^{\varepsilon}.
\end{align*}
Here $d_A(f)$ is the divisor function on $A$ and $d_A(f) \ll |f|^{\varepsilon}$ (see \cite[(3.3)]{G&Zhao12}).  Hence, the contribution from the integration on the new contour can be absorbed in the error term given in \eqref{sumAd}. This completes the proof of the lemma.
\end{proof}

\subsection{Perron’s formula}

The following analogue of Perron’s formula in function fields (see \cite[(2.6)]{Florea17-2}) is an easy consequence of Cauchy's residue theorem.
\begin{lemma}
\label{lem4}
 Suppose that the power series $\sum^{\infty}_{n=0} a(n)u^n$ is absolutely convergent in $|u| \leq  r < 1$.  Then for integers $N \geq 0$,
\begin{align}
\label{perron1}
\begin{split}
 \sum_{n \leq N}a(n)=& \frac 1{2\pi i}\oint\limits_{|u|=r} \Big ( \sum^{\infty}_{n=0}a(n)u^n\Big )\frac {\dif u}{(1-u)u^{N+1}}.
\end{split}
\end{align}
\end{lemma}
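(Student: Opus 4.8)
The plan is to prove \eqref{perron1} exactly as one proves the classical Perron formula, but in the much simpler situation where all series in sight are convergent power series rather than Dirichlet series: expand $1/(1-u)$ as a geometric series, form the Cauchy product with $\sum_n a(n)u^n$, and then read off the coefficient of $u^N$ by a single contour integral over $|u|=r$.

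First I would note that since $r<1$, the geometric series $\sum_{m\geq 0}u^m=1/(1-u)$ converges absolutely on the closed disc $|u|\leq r$, and by hypothesis so does $\sum_{n\geq 0}a(n)u^n$; in particular $\sum_n|a(n)|r^n<\infty$. Multiplying the two series and collecting terms by total degree, the double series being absolutely convergent there, gives for $|u|\leq r$
\begin{align*}
\Big(\sum_{n\geq 0}a(n)u^n\Big)\frac{1}{1-u}=\sum_{N\geq 0}\Big(\sum_{n\leq N}a(n)\Big)u^N.
\end{align*}
Thus $b(N):=\sum_{n\leq N}a(n)$ is precisely the $N$-th Taylor coefficient at $u=0$ of the function $u\mapsto (1-u)^{-1}\sum_{n\geq 0}a(n)u^n$, and the series on the right-hand side above converges absolutely on $|u|=r$.

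It then remains to recover $b(N)$ by the Cauchy coefficient formula: dividing the displayed identity by $u^{N+1}$ and integrating over the circle $|u|=r$ parametrized by $u=re^{i\theta}$, $\theta\in[0,2\pi)$, one has
\begin{align*}
\frac{1}{2\pi i}\oint\limits_{|u|=r}\Big(\sum_{n\geq 0}a(n)u^n\Big)\frac{\dif u}{(1-u)u^{N+1}}=\sum_{M\geq 0}b(M)\cdot\frac{1}{2\pi i}\oint\limits_{|u|=r}u^{M-N-1}\,\dif u=b(N),
\end{align*}
using the elementary identity $\frac{1}{2\pi i}\oint_{|u|=r}u^{j}\,\dif u=1$ if $j=-1$ and $0$ otherwise, so that only the term $M=N$ survives. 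This is exactly \eqref{perron1}. The only point needing a word of justification — and it is entirely routine — is the interchange of the summation over $M$ with the integral, which is legitimate because $\sum_M b(M)u^M$ converges uniformly on the compact circle $|u|=r$ (a consequence of $\sum_M |b(M)|r^M<\infty$ together with the Weierstrass $M$-test); equivalently, one simply quotes the Cauchy coefficient formula, valid on any circle lying in the region of absolute convergence. There is no genuine obstacle here: the lemma is essentially a bookkeeping identity, and I would keep the write-up correspondingly short.
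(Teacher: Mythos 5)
Your proof is correct and is essentially the argument the paper has in mind: the lemma is stated there as "an easy consequence of Cauchy's residue theorem," and your Cauchy-product plus coefficient-extraction computation is just that residue calculation written out in full. No gaps.
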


\section{Proof of Theorem \ref{t1}}
\label{sec 2'}

\subsection{Initial treatment} The proof follows closely that of \cite[Theorem 1.1]{G&Zhao2024-3}. We also use the ideas from \cites{Curran, Szab}. The key point is to apply the upper bounds method of Soundararajan-Harper as in \cites{Sound2009, Harper} to bound $\log |L(1/2, \chi_{D})|$ for $D \in \mathcal{H}_{2g+1, q}$ in terms of a short Dirichlet polynomial over the primes. Such a bound can be derived based the proofs of \cite[Theorem 3.3]{AT14}, \cite[Proposition 4.3]{BFK} and \cite[Lemma 3.1]{DFL}. As already pointed out in the Introduction, our proof of Theorem \ref{t1} differs from those in \cites{Curran, Szab} because there is a contribution from the prime squares. \newline
  
  Observe from \eqref{LchiDdef} that $L(s,\chi_{D})$  is a polynomial of degree $2g$ for $D \in H_{2g+1, q}$.  We may thus proceed as in the proof of \cite[Proposition 4.3]{BFK} by setting $m=2g+1$ there and also make use of the proof of \cite[Theorem 3.3]{AT14} to obtain the following result, analogous to \cite[Proposition 4.3]{BFK}.
\begin{proposition}
\label{prop-ub}
Let $D \in \mathcal{H}_{2g+1, q}$ and $m =2g+1$.  For $h \leq m$ and $z$ with $\Re(z) \geq 0$,  we have
\begin{align}
\label{logLupperbound}
\log \big| L(\tfrac{1}{2}+z, \chi_{D}) \big| \leq \frac{m}{h} + \frac{1}{h} \Re \bigg(  \sum_{\substack{j \geq 1 \\ \deg(P^j) \leq h}} \frac{
\chi_{D}(P^j) \log q^{h - j \deg(P)}}{|P|^{j ( 1/2+z+1/(h \log q) )} \log q^j} \bigg).
\end{align}
\end{proposition}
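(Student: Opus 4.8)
The plan is to follow the standard Soundararajan--Harper strategy, adapted to the function-field setting as in \cite[Proposition 4.3]{BFK} and \cite[Theorem 3.3]{AT14}. First I would use the factorization \eqref{LchiDdef}, which expresses $\mathcal{L}(u,\chi_D) = \prod_{j=1}^{2g}(1-\alpha_j q^{1/2}u)$ with $|\alpha_j|=1$, so that $L(\tfrac12+z,\chi_D)$ is a polynomial in $u=q^{-(1/2+z)}$ of degree $2g$ whose inverse zeros all lie on the circle $|u|=q^{-1/2}$. Writing $z = \sigma + i\tau$ and using $u = q^{-s}$ with $s = \tfrac12+z$, the logarithm $\log|L(\tfrac12+z,\chi_D)| = \sum_{j=1}^{2g}\log|1-\alpha_j q^{1/2}u|$ is a sum of terms of the shape $\log|1-\beta|$ with $|\beta| = q^{1/2}|u| = q^{-\sigma}\le 1$ when $\Re(z)=\sigma\ge 0$. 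The key analytic input is the elementary inequality, valid for $|\beta|\le 1$ and any integer $N\ge 1$,
\begin{align*}
\log|1-\beta| \le \Re\sum_{j=1}^{N}\frac{\beta^j}{j} + \frac{|\beta|^{N+1}}{(N+1)(1-|\beta|)} \le \Re\sum_{j=1}^{N}\frac{\beta^j}{j} + \frac{1}{N+1}\cdot\frac{1}{1-|\beta|},
\end{align*}
together with a smoothing trick that replaces the sharp cutoff at $j=N$ by the weighted cutoff $\log q^{h-j\deg(P)}/\log q^j$ appearing in \eqref{logLupperbound}; this is precisely the device used in \cite[Theorem 3.3]{AT14} to control the tail uniformly. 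The truncation length is governed by $h$, and the crude bound $\sum_j |\beta|^j/j$ over the remaining range contributes the $m/h$ term after summing the $2g$ zeros (using $2g < m = 2g+1$ and that each zero contributes $O(1/h)$ to the main error).

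Next I would convert the sum over the inverse zeros $\alpha_j$ into a sum over primes. Taking the logarithmic derivative of \eqref{Ludef}, one has the Dirichlet-series identity
\begin{align*}
-u\frac{\mathcal{L}'(u,\chi_D)}{\mathcal{L}(u,\chi_D)} = \sum_{P}\sum_{j\ge 1}\chi_D(P^j)\deg(P)\, u^{j\deg(P)} = \sum_{j\ge 1}\Big(\sum_{P}\deg(P)\chi_D(P^j)u^{j\deg(P)}\Big),
\end{align*}
which, matched against $-u\,\mathcal{L}'/\mathcal{L} = \sum_{j}\alpha_j^{\,?}\cdots$ from \eqref{LchiDdef}, yields the ``explicit formula'' $\sum_{j=1}^{2g}\alpha_j^n = -q^{-n/2}\sum_{\deg(P^j)=n}\deg(P)\chi_D(P^j)$ for each $n\ge 1$. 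Substituting this into the expansion of $\sum_j\Re\sum_{n\ge 1}(\alpha_j q^{1/2}u)^n/n$, reorganizing by the degree $n=j\deg(P)$, and matching powers of $|P|$ with the shift $z+1/(h\log q)$ built into the smoothing, produces exactly the prime sum on the right of \eqref{logLupperbound}, with the weight $\log q^{h-j\deg(P)}/\log q^j$ coming from the smoothed truncation and the $j$ in the denominator $\log q^j = j\log q$ from the $1/n$ in the logarithm expansion. The condition $\deg(P^j)=j\deg(P)\le h$ is inherited from the truncation level.

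The step I expect to be the main obstacle is bookkeeping the truncation and the choice of smoothing weight so that the error term comes out cleanly as $m/h$ rather than something like $m/h + (\log\text{something})$: one must verify that after the smoothed cutoff the omitted tail $\sum_{j\deg(P)>h}$, summed over the $2g$ zeros, genuinely contributes $O(m/h)$ uniformly in $D$ and in $z$ with $\Re(z)\ge 0$, using only $|\alpha_j|=1$ and the total count $2g\le m$. This is where the precise form of the weight $\log q^{h-j\deg(P)}$ (which vanishes at $j\deg(P)=h$ and is positive below it) matters, and where one leans on the computation in \cite[Theorem 3.3]{AT14}; the remaining manipulations are routine rearrangements of absolutely convergent sums. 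A secondary point to be careful about is that the inequality $\log|1-\beta|\le\Re\sum\beta^j/j+(\text{tail})$ requires $|\beta|\le 1$, which is exactly guaranteed by $\Re(z)\ge 0$ together with the Riemann hypothesis (Weil's theorem) placing the inverse zeros on $|u|=q^{-1/2}$; no zero lies strictly inside, so no term is problematic.
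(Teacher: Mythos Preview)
Your proposal is correct and follows essentially the same approach as the paper, which does not give a self-contained argument but simply refers to \cite[Proposition 4.3]{BFK} and \cite[Theorem 3.3]{AT14}: factor $\mathcal{L}(u,\chi_D)$ via \eqref{LchiDdef}, expand the logarithm over the inverse zeros, apply the smoothed truncation at level $h$ (bounding the tail by $2g/h\le m/h$ using $|\alpha_j|=1$), and convert zeros to primes through the explicit formula. One minor slip: your displayed elementary inequality for $\log|1-\beta|$ has the wrong sign (since $\log|1-\beta|=-\Re\sum_{k\ge 1}\beta^k/k$), but this is harmless because the sign flips back after substituting $\sum_j\alpha_j^n=-q^{-n/2}\sum_{\deg(P^j)=n}\deg(P)\chi_D(P^j)$, yielding the stated prime sum with the correct positive sign.
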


Lemma \ref{RS} implies that the contribution from terms on the right-hand side of \eqref{logLupperbound} with $j \geq 3$ is $O(1)$.  Thus, from \eqref{logLupperbound} by setting $X=q^{2g+1}, x=q^h$ and $\sigma+it=1/2+z$ with $\sigma, t \in \mr$, we see that, for $\sigma \geq 1/2$ and $x \leq X$,
\begin{align}
\label{logLbound}
\begin{split}
 \log  |L(\sigma+it, \chi)| \leq & \Re \left( \sum_{\substack{  |P| \leq x }} \frac{\chi_D (P)}{|P|^{\sigma+it+1/\log x}}
 \frac{\log (x/|P|)}{\log x} +
 \sum_{\substack{  |P| \leq x^{1/2} }} \frac{\chi_D (P^2)}{|P|^{2\sigma+2it+2/\log x}}  \frac{\log (x/|P|^2)}{2\log x} \right)
 +\frac{\log X}{\log x} + O(1) \\
 \leq  & \Re \left( \sum_{\substack{  |P| \leq x }} \frac{\chi_D (P)}{|P|^{\sigma+it+1/\log x}}
 \frac{\log (x/|P|)}{\log x} + \frac 12
 \sum_{\substack{  |P| \leq x^{1/2} }} \frac{\chi_D (P^2)}{|P|^{2\sigma+2it+2/\log x}} \right)
 +\frac{\log X}{\log x} + O(1),
\end{split}
 \end{align}
   where the last estimation above follows from \eqref{logp}. \newline

Again, \eqref{logp} renders that
\begin{align}
\label{sumprimesquaresimplfied}
\begin{split}
 \sum_{\substack{  |P| \leq x^{1/2} }} \Big ( \frac{\chi_D (P^2)}{|P|^{2\sigma+2it}}-\frac{\chi_D (P^2)}{|P|^{2\sigma+2it+2/\log x}}  \Big ) \ll \sum_{\substack{  |P| \leq x^{1/2} }}\frac {\log |P|}{|P|\log x} = O(1).
\end{split}
 \end{align}

Now \eqref{logLbound} and \eqref{sumprimesquaresimplfied} give
\begin{align}
\label{logLboundsimplified}
\begin{split}
 \log  |L(\sigma+it, \chi)| \leq & \Re \left( \sum_{\substack{  |P| \leq x }} \frac{\chi_D (P)}{|P|^{\sigma+it+1/\log x}}
 \frac{\log (x/|P|)}{\log x} + \frac 12
 \sum_{\substack{  |P| \leq x^{1/2} }} \frac{\chi_D (P^2)}{|P|^{2\sigma+2it}} \right)
 +\frac{\log X}{\log x} + O(1).
\end{split}
 \end{align}

Moreover,
\begin{align}
\label{sump}
\begin{split}
 \frac 12\sum_{\substack{  |P| \leq x^{1/2} }} \frac{\chi_D (P^2)}{|P|^{2\sigma+2it}} =& \frac 12\sum_{\substack{  |P| \leq x^{1/2} }} \frac{1}{|P|^{2\sigma+2it}}
 -\frac 12\sum_{\substack{|P|\leq x^{1/2} \\ P|D}} \frac{1}{|P|^{2\sigma+2it}} \\
 \leq & \frac 12\sum_{\substack{  |P| \leq x^{1/2} }} \frac{1}{|P|^{2\sigma+2it}}+\sum_{\substack{P|D}} \frac{1}{2|P|}.
\end{split}
\end{align}
  Using the inequality $x \leq -\log(1-x)$ for any $0<x<1$, the last expression above is
\begin{align}
\label{sump1}
\begin{split}
\leq \frac 12\sum_{\substack{  |P| \leq x^{1/2} }} \frac{1}{|P|^{2\sigma+2it}}-\sum_{\substack{P|D}}  \log \Big( 1- \frac{1}{2|P|} \Big).
\end{split}
\end{align}

   We deduce readily, from \eqref{logLboundsimplified}--\eqref{sump1}, the following analogous of \cite[Proposition 2]{Szab}, which estimates from above sums with $\log |L(1/2+it, \chi_D)|$ for various $t$.
\begin{proposition}
\label{prop1}
With the notation as above and assuming the truth of GRH, let $k\geq 1$ be a fixed integer and ${\bf a}=(a_1,\ldots ,a_{k}),\ t=(t_1,\ldots ,t_{k})$
 real $k$-tuples such that $a_i \geq 0$ for all $i$. For any monic polynomial $f$, let
$$H(f)=:\frac{1}{2}\Re \Big( \sum^{k}_{m=1}a_m|f|^{-it_m} \Big).$$
Set $a=a_1+\cdots+ a_{k}$.  Then, for large $X=q^{2g+1}$ and $\sigma \geq 1/2$,
\begin{align}
\label{basicest}
\begin{split}
 \sum^{2k}_{j=1}a_j \log| I(D)L(\sigma+it_j,\chi_D)|  \leq 2 \sum_{|P|\leq x} \frac{H(P)\chi_D(P)}{|P|^{\sigma+1/\log x}}\frac{\log (x/|P|)}{\log x}+\sum_{|P|\leq x^{1/2}} \frac{H(P^2)}{|P|^{2\sigma}}+a\frac{\log X}{\log x}+O(1).
\end{split}
\end{align}
\end{proposition}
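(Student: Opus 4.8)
The plan is to read off \eqref{basicest} from the single-character estimate \eqref{logLboundsimplified} together with \eqref{sump}--\eqref{sump1}, after weighting with the $a_j$ and summing over $j$; no analytic input is needed beyond Proposition \ref{prop-ub} and Lemma \ref{RS}, which are already in hand.

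First I would fix $X = q^{2g+1}$ large, a parameter $x$ with $2 \leq x \leq X$, and $\sigma \geq 1/2$, and apply \eqref{logLboundsimplified} with $t = t_j$ for each $j$. Inserting \eqref{sump}--\eqref{sump1} with $t = t_j$ and using $\sum_{P\mid D}\tfrac{1}{2|P|} \leq -\sum_{P\mid D}\log\big(1-\tfrac{1}{2|P|}\big) = -\log I(D)$ gives
\begin{align*}
\log\big|L(\sigma+it_j,\chi_D)\big| \leq{}& \Re\Bigg(\sum_{|P|\leq x}\frac{\chi_D(P)}{|P|^{\sigma+it_j+1/\log x}}\frac{\log(x/|P|)}{\log x}\Bigg) \\
&{}+ \Re\Bigg(\frac12\sum_{|P|\leq x^{1/2}}\frac{1}{|P|^{2\sigma+2it_j}}\Bigg) - \log I(D) + \frac{\log X}{\log x} + O(1).
\end{align*}
Adding $\log I(D)$ to both sides cancels the $-\log I(D)$ on the right: this is exactly the reason the factor $I(D)$ is carried along with $L$ on the left of \eqref{basicest}. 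The real part is used only for the prime-square term, where the primes $P\mid D$ (for which $\chi_D(P^2)=0$) are discarded at a cost of $\sum_{P\mid D}|P|^{-2\sigma}\leq\sum_{P\mid D}|P|^{-1}$, which is absorbed into the $-\log I(D)$.

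Next I would multiply through by $a_j \geq 0$---nonnegativity of the $a_j$ is what preserves the inequality---and sum over $j$. In the main prime sum the quantities $\chi_D(P)$, $|P|^{-\sigma-1/\log x}$ and $\log(x/|P|)/\log x$ are real, so interchanging the sums produces a factor $\Re\big(\sum_j a_j|P|^{-it_j}\big) = 2H(P)$, giving the term $2\sum_{|P|\leq x}\frac{H(P)\chi_D(P)}{|P|^{\sigma+1/\log x}}\frac{\log(x/|P|)}{\log x}$. Similarly the prime-square sum contributes a factor $\Re\big(\sum_j a_j|P|^{-2it_j}\big) = \Re\big(\sum_j a_j|P^2|^{-it_j}\big) = 2H(P^2)$, which the prefactor $\tfrac12$ turns into $\sum_{|P|\leq x^{1/2}}\frac{H(P^2)}{|P|^{2\sigma}}$. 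The remaining pieces give $\sum_j a_j\frac{\log X}{\log x} = a\frac{\log X}{\log x}$ and an error $\sum_j a_j\,O(1) = O(1)$, with implied constant depending on $k$ and the $a_j$. This is precisely \eqref{basicest}.

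I do not expect a genuine obstacle: the statement is a bookkeeping rearrangement of \eqref{logLboundsimplified}--\eqref{sump1}, and its substance---the pointwise bound for $\log|L|$ by a short prime polynomial (Proposition \ref{prop-ub}, which rests on the Riemann hypothesis over function fields, available via \eqref{LchiDdef}) together with the Mertens-type estimate \eqref{logp} that controls the $O(1)$ errors---is already established. The only points requiring care are (i) keeping track of where $\Re(\cdot)$ may legitimately be moved through an inequality, and (ii) propagating the restrictions $2 \leq x \leq X$ and $\sigma \geq 1/2$ so that \eqref{logLboundsimplified} and Lemma \ref{RS} stay applicable throughout.
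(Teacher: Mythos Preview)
Your proposal is correct and is precisely the route the paper takes: it states that Proposition~\ref{prop1} is deduced ``readily, from \eqref{logLboundsimplified}--\eqref{sump1}'', i.e., by weighting \eqref{logLboundsimplified} (after inserting \eqref{sump}--\eqref{sump1}) with the nonnegative $a_j$, summing, and collecting the resulting coefficients into $H(P)$ and $H(P^2)$. Your handling of the $I(D)$ factor via $\sum_{P\mid D}\tfrac{1}{2|P|}\leq -\log I(D)$ is exactly the intended mechanism.
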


   We also note the following upper bounds on moments of quadratic twists of $L$-functions, which is a consequence of \cite[Theorem 2.7]{Florea17-2}.
\begin{lemma}
\label{prop: upperbound}
With the notation as in Proposition~\ref{prop1}, we have for large $X=q^{2g+1}$,
\begin{align*}
%%\label{upperbound1}
   \sum_{D\in\mathcal{H}_{2g+1,q}}\big| L\big(\tfrac 12+it_1,\chi_D \big) \big|^{a_1} \cdots \big| L\big(\tfrac 12+it_{k},\chi_D  \big) \big|^{a_{k}} \ll_{{\bf a}} &  X(\log X)^{O(1)}.
\end{align*}
\end{lemma}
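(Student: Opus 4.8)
The plan is to prove Lemma~\ref{prop: upperbound} by combining H\"older's inequality with the fourth-moment-type bound supplied by \cite[Theorem 2.7]{Florea17-2}. First I would recall that \cite[Theorem 2.7]{Florea17-2} gives, for each fixed real $t$, an estimate of the shape
\begin{align*}
\sum_{D\in\mathcal{H}_{2g+1,q}}\big|L\big(\tfrac12+it,\chi_D\big)\big|^{4}\ll X (\log X)^{O(1)},
\end{align*}
with an implied constant uniform in $t$ (here one uses that $L(\tfrac12+it,\chi_D)$, viewed through the variable $u=q^{-s}$, is periodic in $t$ with period $2\pi/\log q$, so it suffices to have the bound for $t$ in a fixed compact interval, exactly the situation of \cite{Florea17-2}; this also dispenses with any need to track $t$-dependence). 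Actually, for the purposes of this lemma even a bound of the form $\ll X(\log X)^{O(1)}$ for the $2\ell$-th moment for some fixed integer $\ell$ large enough would suffice, since the $a_j$ are fixed.

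Next I would apply H\"older's inequality to the left-hand side to decouple the $k$ factors. Writing $a=a_1+\cdots+a_k$ and choosing exponents $p_j$ with $\sum_j 1/p_j=1$, say $p_j = a/a_j$ when all $a_j>0$ (and absorbing any zero $a_j$ trivially), one gets
\begin{align*}
\sum_{D}\prod_{j=1}^{k}\big|L\big(\tfrac12+it_j,\chi_D\big)\big|^{a_j}
\leq \prod_{j=1}^{k}\Big(\sum_{D}\big|L\big(\tfrac12+it_j,\chi_D\big)\big|^{a}\Big)^{a_j/a}.
\end{align*}
This reduces everything to bounding $\sum_{D}|L(\tfrac12+it,\chi_D)|^{a}$ for a single shift $t$ and a single (now possibly non-integer, but fixed) exponent $a>0$.

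To handle a single large power $a$, I would reduce to an even integer moment. If $a\le 4$ one can interpolate between the first moment (or just the trivial bound $|\mathcal{H}_{2g+1,q}|\ll X$ via positivity, or the mean value in \eqref{meancharsum} applied with an approximate functional equation) and the fourth moment from \cite{Florea17-2} using H\"older again; if $a>4$, pick an even integer $2\ell\ge a$ and note $|L|^a\le |L|^{2\ell}\cdot(\text{something})$ is not directly available, so instead one should observe that \cite[Theorem 2.7]{Florea17-2} in fact gives sharp upper bounds $\sum_D |L(\tfrac12+it,\chi_D)|^{2\ell}\ll X(\log X)^{\ell(\ell+1)/2}$ for every fixed real $\ell\ge 0$ (this is precisely the symplectic analogue of Soundararajan's bound and is exactly what \cite[Theorem 2.7]{Florea17-2} asserts, with the extra $q^\varepsilon$ for $q$ prime, which is harmless here since $\log X = (2g+1)\log q$ and we only want a power of $\log X$). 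Taking $2\ell$ to be any fixed real exponent $\ge a$ and applying H\"older once more,
\begin{align*}
\sum_{D}\big|L\big(\tfrac12+it,\chi_D\big)\big|^{a}
\leq |\mathcal{H}_{2g+1,q}|^{1-a/(2\ell)}\Big(\sum_{D}\big|L\big(\tfrac12+it,\chi_D\big)\big|^{2\ell}\Big)^{a/(2\ell)}
\ll X(\log X)^{O_{\mathbf a}(1)},
\end{align*}
using $|\mathcal{H}_{2g+1,q}|\asymp X$. Substituting back through the two H\"older steps yields the claimed bound $\ll_{\mathbf a} X(\log X)^{O(1)}$.

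The main obstacle, such as it is, is purely bookkeeping: making sure that the cited moment bound from \cite[Theorem 2.7]{Florea17-2} is available with a power-of-$\log X$ saving uniformly in the shift $t$ and for a large enough exponent. Uniformity in $t$ is free by periodicity, as noted; the large-exponent version is the content of the symplectic upper bound in \cite{Florea17-2} (stated there, in the notation of this paper, as the remark following Theorem~\ref{t1}). Everything else is two applications of H\"older's inequality and the trivial count $|\mathcal{H}_{2g+1,q}|\asymp X$. Since the lemma only claims an unspecified power $(\log X)^{O(1)}$, no care is needed in optimizing constants, so no genuine difficulty arises.
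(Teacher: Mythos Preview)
Your approach is correct and is essentially what the paper does: the paper simply states that the lemma ``is a consequence of \cite[Theorem 2.7]{Florea17-2}'' without further detail, and your H\"older reduction to single-shift moments (together with periodicity in $t$) is the natural way to extract the multi-shift bound from Florea's single-shift result. Note that since \cite[Theorem 2.7]{Florea17-2} already furnishes $\sum_D |L(\tfrac12+it,\chi_D)|^{a}\ll X(\log X)^{O_a(1)}$ for every fixed real $a>0$, your second H\"older step (interpolating down from an even integer moment) is unnecessary---after the first H\"older inequality you can apply Florea's bound directly with exponent $a=a_1+\cdots+a_k$.
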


\subsection{Completion of the proof}

 Recall that we set $X=q^{2g+1}$. Following the ideas of A. J. Harper in \cite{Harper}, we define, for a large number $M$ depending on ${\bf a}$ only,
\begin{align*}
%%\label{alphadef}
\begin{split}
 \alpha_{0} = 0, \;\;\;\;\; \alpha_{j} = \frac{20^{j-1}}{(\log\log X)^{2}}, \;\;\; \mbox{for all} \; j \geq 1, \quad
\mathcal{J} = \mathcal{J}_{{\bf a},X} = 1 + \max\{j : \alpha_{j} \leq 10^{-M} \} .
\end{split}
\end{align*}

   We set $P_j=(X^{\alpha_{j-1}}, X^{\alpha_{j}}]$ for $1 \leq j \leq \mathcal{J}$.  Lemma \ref{RS} gives that, for $X=q^{2g+1}$ large enough,
\begin{align*}
%%\label{sumpj}
\begin{split}
 \sum_{P  \in P_{1}} \frac{1}{|P|} \leq & \log\log X=\alpha^{-1/2}_1,  \\
 \sum_{ P \in P_{j+1}} \frac{1}{|P|}
 =& \log \alpha_{j+1} - \log \alpha_{j} + o(1) =  \log 20 + o(1) \leq 10, \quad 1 \leq j \leq \mathcal{J}-1, \; \mbox{and} \\
\mathcal{J}-j \leq & \frac{\log(1/\alpha_{j})}{\log 20}.
\end{split}
\end{align*}

   We define $\lceil x \rceil$ to be $\min \{ n \in \intz : n \geq x\}$ for any real number $x$ and $\ell_j =2\lceil e^{B}\alpha^{-3/4}_j \rceil$ with $B$ being a large number to be chosen later depending on ${\bf a}$ only.  Note that $\ell_j$ are all even integers.  We set for any non-negative integer $\ell$ and any real number $x$,
\begin{equation*}
%%\label{E_ell}
E_{\ell}(x) = \sum_{j=0}^{\ell} \frac{x^{j}}{j!}.
\end{equation*}

  We define three totally multiplicative functions $H(f, x),  H_1(f, x), s(f, x)$ whose values at primes $P$ are set to be
\begin{align*}
%%\label{hbound}
\begin{split}
  H(P, x)=& \frac{2 H(P)}{a|P|^{1/\log x}}, \quad H_1(P, x)= \frac{4H(P^2)}{a^2|P|^{2/\log x}}, \quad
  s(P,x)= \frac{\log (x/|P|)}{\log x}.
\end{split}
\end{align*}

First, we see that for any $f \in A$,
\begin{align*}
%%\label{spdef}
\begin{split}
  |H(f, x)| \leq 1.
\end{split}
\end{align*}

Let $w(n)$ denote the multiplicative function such that $w(P^{\alpha}) = \alpha!$ for prime powers $P^{\alpha}$. We set $x=X^{\alpha_j}$ and $z=1/2$ in \eqref{logLupperbound} to deduce that
\begin{align*}
%%\label{basicest}
\begin{split}
 & \sum^{k}_{m=1}a_m\log |I(D)L(\tfrac 12+it_m,\chi_D)| \le a\sum^{j}_{l=1} {\mathcal M}_{l,j}(D)+\frac {a^2}{4}\sum_{|P|\leq X^{\alpha_j/2}}
 \frac{H_1(P^2,X^{\alpha_j})}{|P|}+a\alpha^{-1}_j+O(1),
\end{split}
\end{align*}
  where
\[ {\mathcal M}_{l,j}(D) = \sum_{P\in P_l}\frac{H(P,X^{\alpha_j}) \chi_D(P)}{\sqrt{|P|}}s(P, X^{\alpha_j}), \quad 1\leq l \leq j \leq \mathcal{J}. \]

  We also define the following sets:
\begin{align*}
  \mathcal{S}(0) =& \{ D\in\mathcal{H}_{2g+1,q} : |a{\mathcal M}_{1,l}(D)| > \frac {\ell_{1}}{10^3} \; \text{ for some } 1 \leq l \leq \mathcal{J} \} ,   \\
 \mathcal{S}(j) =& \{ (D\in\mathcal{H}_{2g+1,q}  : |a{\mathcal M}_{m,l}(D)| \leq
 \frac {\ell_{m}}{10^3}  \; \mbox{for all} \; 1 \leq m \leq j, \; \mbox{and} \; m \leq l \leq \mathcal{J}, \\
 & \;\;\;\;\; \text{but }  |a{\mathcal M}_{j+1,l}(D)| > \frac {\ell_{j+1}}{10^3} \; \text{ for some } j+1 \leq l \leq \mathcal{J} \} ,  \quad  1\leq j \leq \mathcal{J}, \\
 \mathcal{S}(\mathcal{J}) =& \{D\in\mathcal{H}_{2g+1,q}  : |a{\mathcal M}_{m,
\mathcal{J}}(D)| \leq \frac {\ell_{m}}{10^3} \; \mbox{for all} \; 1 \leq m \leq \mathcal{J}\}.
\end{align*}

  We now proceed using the approach of \cite[Theorem 1.1]{G&Zhao2024-3}. As our proof follows by a straightforward modification of that of \cite[Theorem 1.1]{G&Zhao2024-3}, we shall omit some details here. Let $\Omega(f)$ denote the number of prime powers dividing $f \in A$ to see that
\begin{align}
\label{S0est}
\begin{split}
 \sum_{\substack{D \in \mathcal{S}(0)}}1   \leq &  \sum_{\substack{D\in\mathcal{H}_{2g+1,q} }}\sum^{\mathcal{J}}_{l=1}
\Big ( \frac {10^3}{\ell_1}{|a\mathcal
M}_{1, l}(D)| \Big)^{2\lceil 1/(10^3\alpha_{1})\rceil } \\
= & \sum^{\mathcal{J}}_{l=1}
 \Big ( \frac {a \cdot 10^3}{\ell_1} \Big)^{2\lceil 1/(10^3\alpha_{1})\rceil }  \sum_{ \substack{ f \\ \Omega(f) = 2\lceil 1/(10^3\alpha_{1})\rceil \\ P|f \implies
P \in P_1}}
\frac{(2\lceil 1/(10^3\alpha_{1})\rceil  )!s(f, X^{\alpha_l})}{\sqrt{f}}\frac{H(f,X^{\alpha_l})
  }{w(f)} \sum_{\substack{D\in\mathcal{H}_{2g+1,q} }}\chi_D(f).
\end{split}
\end{align}

   We apply Lemma \ref{prsum} to evaluate the inner-most sum on the right-hand side of \eqref{S0est}.  Thus, similar to \cite[(3.13)]{G&Zhao2024-3},
 \begin{align}
\label{S0upperbound}
\begin{split}
 \sum_{\substack{D \in \mathcal{S}(0)}}1   \ll  Xe^{-\alpha_1^{-1}/20}=Xe^{-(\log\log X)^{2}/20}  .
\end{split}
\end{align} 

  We now deduce via H\"older's inequality that
\begin{align}
\label{LS0bound}
\begin{split}
  \sum_{\substack{D \in \mathcal{S}(0)}} & \big| L\big(\tfrac12+it_1,\chi_D \big) \big|^{a_1} \cdots \big| L\big(\tfrac12+it_{k},\chi_D  \big) \big|^{a_{k}} 
\leq   \Big ( \sum_{\substack{D \in \mathcal{S}(0)}}1  \Big )^{1/2} \Big (
 \sum_{\substack{D \in \mathcal{S}(0)}}\big| L\big(\tfrac12+it_1,\chi_D \big) \big|^{2a_1} \cdots \big| L\big(\tfrac12+it_{k},\chi_D  \big) \big|^{2a_{k}} \Big)^{1/2}.
\end{split}
\end{align}

  Note that by \eqref{sumcosp}, we have for any $t \in \mr$, 
\begin{align*}
%%\label{S0bound1}
 \log X \gg \Big|\zeta_A \Big(1+it+\frac 1{\log X}\Big) \Big| \gg  \min \Big( \log X, \frac {1}{\overline {|t| \log q}} \Big) \gg 1.
\end{align*}  
  
  It then follows from Lemma \ref{prop: upperbound}, \eqref{S0upperbound} and \eqref{LS0bound} that
\begin{align*}
%%\label{S0bound1}
\sum_{\substack{D \in \mathcal{S}(0)}} & \big| L\big(\tfrac12+it_1,\chi_D \big) \big|^{a_1} \cdots \big| L\big(\tfrac12+it_{k},\chi_D  \big) \big|^{a_{k}} \\
   \ll & X(\log X)^{(a_1^2+\cdots +a_{k}^2)/4} \prod_{1\leq j<l \leq k} \Big|\zeta_A \Big(1+i(t_j-t_l)+\frac 1{\log X}\Big) \Big|^{a_ja_l/2}\Big|\zeta_A \Big(1+i(t_j+t_l)+\frac 1{\log X} \Big) \Big|^{a_ja_l/2} \\
& \hspace*{2cm} \times \prod_{1\leq j\leq k} \Big|\zeta_A \Big(1+2it_j+\frac 1{\log X}\Big) \Big|^{a^2_j/4+a_j/2}.
\end{align*}

  It thus remains to show that
\begin{align}
\label{sumovermj}
\begin{split}
\sum_{j=1}^{\mathcal{J}} \  \sum_{\substack{D \in \mathcal{S}(j)}} & \big| L\big(\tfrac12+it_1,\chi_D \big) \big|^{a_1} \cdots \big| L\big(\tfrac12+it_{k},\chi_D  \big) \big|^{a_{k}}\\
   \ll & X(\log X)^{(a_1^2+\cdots +a_{k}^2)/4} \prod_{1\leq j<l \leq k} \Big|\zeta_A \Big(1+i(t_j-t_l)+\frac 1{\log X}\Big) \Big|^{a_ja_l/2}\Big|\zeta_A \Big(1+i(t_j+t_l)+\frac 1{\log X} \Big) \Big|^{a_ja_l/2} \\
& \hspace*{2cm} \times \prod_{1\leq j\leq k} \Big|\zeta_A \Big(1+2it_j+\frac 1{\log X}\Big) \Big|^{a^2_j/4+a_j/2}.
\end{split}
\end{align}

 We deduce from \eqref{basicest} that for a fixed $j$ with $1 \leq j \leq \mathcal{J}$,   
\begin{align}
\label{zetaNbounds}
\begin{split}
 \big| L\big(\tfrac12+ it_1&, \chi_D \big)  \big|^{a_1} \cdots \big| L\big(\tfrac12+it_{k},\chi_D  \big) \big|^{a_{k}} \\
\ll & I(D)^{-a}\exp \left(\frac {(Q+1)a}{\alpha_j} \right)\exp \left(\sum_{|P|\leq X^{\alpha_j/2}} \frac{a^2H_1(P,X^{\alpha_j})}{4|P|} \right) \exp \Big (
 a\sum^j_{l=1}{\mathcal M}_{l,j}(D)\Big )\Phi \Big( \frac d{X} \Big).
\end{split}
\end{align}

 We apply the Taylor formula with integral remainder to see that
\begin{align*}
%%\label{eMNprodest1}
\begin{split}
\exp \Big ( a {\mathcal M}_{l,j}(D) \Big )  =E_{\ell_l}( a{\mathcal M}_{l,j}(D)) \left( 1+   O(e^{-\ell_l}) \right).
\end{split}
 \end{align*}

  Inserting the above into \eqref{zetaNbounds}, we obtain that
\begin{align*}
%%\label{LboundinSP1}
\begin{split}
 \big| L\big(\tfrac12+it_1 & ,\chi_D \big) \big|^{a_1} \cdots \big| L\big(\tfrac12+it_{k},\chi_D  \big) \big|^{a_{k}}  \\
\ll &  \exp \left(\frac {(Q+1)a}{\alpha_j} \right)\exp \left(\sum_{|P|\leq X^{\alpha_j/2}}\frac{a^2H_1(P,X^{\alpha_j})}{4|P|} \right) I(D)^{-a}\prod^{j}_{l=1}E_{\ell_l}( a{\mathcal M}_{l,j}(D)) .
\end{split}
\end{align*}

 It follows from the above and the description on $\mathcal{S}(j)$  that for a fixed $j \geq 1$,
\begin{align}
\label{upperboundprodE0}
\begin{split}
 & \sum_{\substack{D \in \mathcal{S}(j)}}  \big| L\big(\tfrac12+it_1,\chi_D \big) \big|^{a_1} \cdots \big| L\big(\tfrac12+it_{k},\chi_D  \big) \big|^{a_{k}} 
\ll \exp \left(\frac {(Q+1)a}{\alpha_j} \right)\exp \left(\sum_{|P|\leq X^{\alpha_j/2}} \frac{a^2H_1(P,X^{\alpha_j})}{4|P|} \right) \sum^{ \mathcal{J}}_{v=j+1}S_v,
\end{split}
\end{align}
  where
\begin{align*}
%%\label{Sudef}
\begin{split}
S_v =: \sum_{\substack{D\in\mathcal{H}_{2g+1,q} }}I(D)^{-a}\prod^{j}_{l=1}E_{\ell_l}(a{\mathcal M}_{l,j}(D))  \Big ( \frac {10^3}{\ell_{j+1}}|a {\mathcal M}_{j+1,v}(D)|\Big)^{2\lceil 1/(10\alpha_{j+1})\rceil }.
\end{split}
\end{align*}

  We now focus on the evaluation of $S_v$ for a fixed $v$ by expanding the factors in $S_v$ into Dirichlet series. Similar to \cite[(3.24)]{G&Zhao2024-3}, we may write for simplicity that
\begin{align}
\label{Suexpression}
 S_v= \sum_{|f|  \leq X^{1/10}} c_f  \sum_{\substack{D\in\mathcal{H}_{2g+1,q} }} I(D)^{-a}\chi_D(f),
\end{align}
  where $|c_{f}| \ll  X^{\varepsilon}$. \newline

  We now apply Lemma \ref{prsum} to evaluate the inner-most sum in \eqref{Suexpression} by noting that the contribution from the $O$-term in \eqref{sumAd} is
negligible, while the contribution from the main term in \eqref{sumAd} to $S_v$ is
\begin{align*}
%%\label{Sumainterm}
\begin{split}
  \ll X \prod^j_{l=1} & \Big (\sum_{\substack{f_l=\square}} \frac{H(f_l,X^{\alpha_j})s(f_l,X^{\alpha_j})}{\sqrt{f_l}} \frac{a^{\Omega(f_l)}}{w(f_l)}  b_l(f_l) \prod_{P|f_l}(1+I(P)^{-a}|P|^{-1})^{-1}\Big )  \Big ( \Big ( \frac {a \cdot 10^3}{\ell_{j+1}} \Big)^{2\lceil 1/(10^3\alpha_{j+1})\rceil }  \\
& \times \sum_{ \substack{ f_u=\square \\ \Omega(f_u) = 2\lceil 1/(10^3\alpha_{j+1})\rceil \\ P|f_u \implies
P \in P_{j+1}}}
\frac{(2\lceil 1/(10^3\alpha_{j+1})\rceil  )!s(f_u, X^{\alpha_u})}{\sqrt{f_u}}\frac{H(f_u, X^{\alpha_u})
  }{w(f_u)}\prod_{P|f_u}(1+I(P)^{-a}|P|^{-1})^{-1} \Big ),
\end{split}
\end{align*}
  where $b_j(f)$, $1 \leq j \leq \mathcal{J}$ are functions such that $b_j(f)=0$ or $1$ and $b_j(f)=1$ if and only if $\Omega(f) \leq \ell_j$ and the primes dividing $f$ are all from the interval $P_j$. \newline

   Using the observation that $\prod_{P|f_l}(1+I(P)^{-k}|P|^{-1})^{-1} \leq 1$ and $H(f, X^{\alpha_j}), H(f,  X^{\alpha_u}) \geq 0$ when $f=\square$, we see that the expression above is
\begin{align*}
%%\label{Sumainterm}
\begin{split}
  \ll X \prod^j_{l=1} & \Big (\sum_{\substack{f_l=\square}} \frac{H(f_l,X^{\alpha_j})s(f_l,X^{\alpha_j})}{\sqrt{f_l}} \frac{a^{\Omega(f_l)}}{w(f_l)}  b_l(f_l) \Big ) \Big ( \Big ( \frac {a \cdot 10^3}{\ell_{j+1}} \Big)^{2\lceil 1 /(10^3\alpha_{j+1})\rceil } \\
& \times \sum_{ \substack{ f_u=\square \\ \Omega(f_u) = 2\lceil 1/(10^3\alpha_{j+1})\rceil \\ P|f_u \implies
P \in P_{j+1}}}
\frac{(2\lceil 1/(10^3\alpha_{j+1})\rceil  )!s(f_u, X^{\alpha_u})}{\sqrt{f_u}}\frac{H(f_u, X^{\alpha_u})
  }{w(f_u)}\Big ).
\end{split}
\end{align*}

  We evaluate
\begin{align}
\label{sumsqurei}
   \sum_{\substack{f_l=\square}} \frac{H(f_l, X^{\alpha_j})s(f_l,X^{\alpha_j})}{\sqrt{f_l}} \frac{a^{\Omega(f_l)}}{w(f_l)}  b_l(f_l)
\end{align}
  by noting that the factor $b_l(f_l)$ limits $f_l$ to have all prime factors in $P_l$ such that $\Omega(f_l) \leq \ell_l$. If we remove the restriction on $\Omega(f_l)$, then \eqref{sumsqurei} becomes
\begin{align}
\label{sumsqureififree}
 \sum_{\substack{f_l=\square \\ P | f_l \Rightarrow P \in P_l}} \frac{H(f_l, X^{\alpha_j})s(f_l,X^{\alpha_j})}{\sqrt{f_l}} \frac{a^{\Omega(f_l)}}{w(f_l)}.
\end{align}
     On the other hand, using Rankin's trick by noticing that $2^{n_l-\ell_l}\ge 1$ if $\Omega(n_l) > \ell_l$, we see that the error incurred in this relaxation does not exceed
\begin{align}
\label{sumsqureierror1}
 \sum_{\substack{f_l=\square \\ P | f_l \Rightarrow P \in P_l}} \frac{2^{\Omega(f_l)-\ell_l}|H(f_l,X^{\alpha_j})s(f_l,X^{\alpha_j})|}{\sqrt{f_l}} \frac{a^{\Omega(f_l)}}{w(f_l)}.
\end{align}
  
   As the expressions in \eqref{sumsqureififree} and \eqref{sumsqureierror1} are now multiplicative functions of $f_l$, we may evaluate them in terms of products over primes.  From Lemma \ref{RS}, the observation that for $|P| \leq X^{\alpha_j}$, 
\begin{align*}
%%\label{sumsqureifif'ifreemain}
  s(P, X^{\alpha_j})=1+O\Big( \frac {\log |P|}{\log X^{\alpha_j}} \Big ),
\end{align*}
  we deduce that
\begin{align*}
%%\label{sumsnlest}
\begin{split}
 & \sum_{\substack{f_l=\square}} \frac{H(f_l,X^{\alpha_j})s(f_l,X^{\alpha_j})}{\sqrt{f_l}} \frac{a^{\Omega(f_l)}}{w(f_l)}  b_l(f_l) 
 \leq 
  \Big (1+O(2^{-\ell_l/2})\Big )\exp \Big ( \sum_{P \in  P_l } \frac {a^2H^2(P,X^{\alpha_j})}{2|P|}+O\Big( \sum_{P \in  P_l } \Big (\frac {\log |P|}{|P|\log X^{\alpha_j}}+\frac 1{|P|^{2}}  \Big) \Big)\Big ).
\end{split}
\end{align*}

   By similar arguments, we see that upon taking $B$ large enough,
\begin{align*}
%%\label{Su1stestmain}
\begin{split}
   \Big ( \frac {a \cdot 10^3}{\ell_{j+1}} \Big)^{2\lceil 1/(10^3\alpha_{j+1})\rceil } \sum_{ \substack{ f_u=\square \\ \Omega(f_u) = 2\lceil 1/(10^3\alpha_{j+1})\rceil \\ P|f_u \implies
P \in P_{j+1}}}
\frac{(2\lceil 1/(10^3\alpha_{j+1})\rceil  )!s(f_u, X^{\alpha_u})}{\sqrt{f_u}}\frac{H(f_u,X^{\alpha_u})
  }{w(f_u)} \ll & e^{-10^3(Q+1)a/(2\alpha_{j+1})}.
\end{split}
\end{align*}   
   
   It follows from the above and Lemma \ref{RS}, 
\begin{align*}
%%\label{prodEestmain}
\begin{split}
  S_v
\ll & e^{-10^3(Q+1)a/(2\alpha_{j+1})}X \exp \Big (\sum_{P \in  \bigcup^j_{l=1}P_l }\frac {a^2H^2(P, X^{\alpha_j})}{2|P|}
 +O\Big(\sum_{|P| \in  \bigcup^j_{l=1}P_l }\Big (\frac {\log |P|}{|P|\log X^{\alpha_j}}+\frac 1{p^{2}}\Big ) \Big )\Big ) \\
 \ll& e^{-10^3(Q+1)a/(2\alpha_{j+1})}X \exp \Big (\sum_{P \in  \bigcup^j_{l=1}P_l }\frac {a^2H^2(P, X^{\alpha_j})}{2p} \Big ).
\end{split}
\end{align*}
   
  We insert the above estimation into \eqref{upperboundprodE0} and make use of the observation that $20/\alpha_{j+1}=1/\alpha_j$.  Consequently
\begin{align}
\label{produpperboundoverSj}
\begin{split}
\sum_{\substack{D \in \mathcal{S}(j)}} & \big| L\big(\tfrac12+it_1,\chi_D \big) \big|^{a_1} \cdots \big| L\big(\tfrac12+it_{k},\chi_D  \big) \big|^{a_{k}}  \\
\ll & \exp \left(-\frac {(Q+1)a}{\alpha_j} \right)X
\exp \Big (\sum_{P \in  \bigcup^j_{l=1}P_l }\frac {a^2H^2(P,X^{\alpha_j})}{2p}+\sum_{|P|\leq X^{\alpha_j/2}}
\frac{a^2H_1(P,X^{\alpha_j})}{4|P|} \Big ) \\
\ll & X \exp \Big (-\frac {(Q+1)a}{\alpha_j} + \sum_{|P| \leq X}\frac {(2H(P))^2)}{2|P|}+\sum_{|P|\leq X} \frac{H(P^2)}{|P|} \Big ).
\end{split}
\end{align}
 
  We now apply Lemma \ref{mertenstype} to evaluate the last expression in \eqref{produpperboundoverSj} and note that summing over $j$ leads to a convergent series. 
  This enables us to derive the desired estimation in \eqref{sumovermj} and therefore completes the proof of Theorem \ref{t1}.

\section{Proof of Theorem \ref{quadraticmean}}
\label{sec 3}

The proof of Theorem \ref{quadraticmean} uses ideas in the proof of \cite[Theorem 3]{Szab}. Without loss of generality, we may assume that $\log_q Y$ is a positive integer.  Recall the definition of $\mathcal{L}(u,\chi_D)$ in \eqref{Ludef}.  We apply Perron’s formula in \eqref{perron1} to see that for a small $r>0$,
\begin{align}
\label{sumchi}
\sum_{|f| \leq  Y} \chi_D(f)=& \frac 1{2\pi i}\oint\limits_{|u|=r} \Big ( \sum_{f} \chi_D(f) u^{d(f)}\Big )\frac {\dif u}{(1-u)u^{\log_q Y+1}}=\frac 1{2\pi i}\oint\limits_{|u|=r} \frac {\mathcal{L}(u,\chi_D) \dif u}{(1-u)u^{\log_q Y+1}}.
\end{align}

  We shift the line of integration in \eqref{sumchi} to $|u|= q^{-1/2}$ without encountering any pole. Thus
\begin{align*}
%%\label{2.5}
   \sum_{|f| \leq  Y} \chi_D(f)=\frac 1{2\pi i}\oint\limits_{|u|=q^{-1/2}} \frac {\mathcal{L}(u,\chi_D) \dif u}{(1-u)u^{\log_q Y+1}}= \frac {Y^{1/2}}{2\pi i}\oint\limits_{|u|=1} \frac {\mathcal{L}(u/\sqrt{q},\chi_D) \dif u}{(1-u/\sqrt{q})u^{\log_q Y+1}}.
\end{align*}
    We deduce from the above that
\begin{align*}
%%\label{2.5}
  \sum_{D \in \mathcal{H}_{2g+1,q}} \Big |\sum_{|f| \leq  Y} \chi_D(f)\Big |^{2m} \ll & Y^m\sum_{D \in \mathcal{H}_{2g+1,q}} \Big |\oint\limits_{|u|=1} \frac {\mathcal{L}(u/\sqrt{q},\chi_D) \dif u}{(1-u/\sqrt{q})u^{\log_q Y+1}}\Big |^{2m} \ll Y^m\sum_{D \in \mathcal{H}_{2g+1,q}} \Big (\int\limits_{0}^{2\pi} \Big |\mathcal{L}(e^{it}/\sqrt{q},\chi_D) \Big |\dif t\Big )^{2m}.
\end{align*}

  It follows that to prove Theorem \ref{quadraticmean}, it suffices to establish the following.
\begin{proposition}
\label{t3prop}
 With the notation as above, we have for any real number $m \geq 3/2$,
\begin{align}
\label{finiteintest}
\begin{split}
 & \sum_{D \in \mathcal{H}_{2g+1,q}} \Big ( \Big |\int\limits_{0}^{2\pi} \Big |\mathcal{L}(e^{it}/\sqrt{q},\chi) \Big | \dif t \Big )^{2m} \ll  X(\log X)^{2m^2-m+1}.
\end{split}
\end{align}
\end{proposition}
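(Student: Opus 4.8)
The plan is to bound the $2m$-th moment of the integral $\int_0^{2\pi}|\mathcal{L}(e^{it}/\sqrt q,\chi_D)|\,\dif t$ by moving the $2m$-th power inside via Hölder's inequality and then applying Theorem \ref{t1} pointwise in $t$. Write $\sigma = 1/2$ so that $\mathcal{L}(e^{it}/\sqrt q,\chi_D) = L(\tfrac12 + it/\log q,\chi_D)$ after the change of variables $u = q^{-s}$; thus $|\mathcal{L}(e^{it}/\sqrt q,\chi_D)| = |L(\tfrac12 + i\tau,\chi_D)|$ with $\tau = t/\log q \in [0,2\pi/\log q)$. First I would apply Hölder (or Jensen, since $\dif t$ on $[0,2\pi]$ can be normalized to a probability measure) to get
\begin{align*}
\Big(\int_0^{2\pi}\big|\mathcal{L}(e^{it}/\sqrt q,\chi_D)\big|\,\dif t\Big)^{2m} \ll \int_0^{2\pi}\big|\mathcal{L}(e^{it}/\sqrt q,\chi_D)\big|^{2m}\,\dif t,
\end{align*}
so that after summing over $D$ and interchanging sum and integral,
\begin{align*}
\sum_{D\in\mathcal{H}_{2g+1,q}}\Big(\int_0^{2\pi}\big|\mathcal{L}(e^{it}/\sqrt q,\chi_D)\big|\,\dif t\Big)^{2m} \ll \int_0^{2\pi}\Big(\sum_{D\in\mathcal{H}_{2g+1,q}}\big|L(\tfrac12+i\tau,\chi_D)\big|^{2m}\Big)\,\dif t.
\end{align*}

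Next I would invoke Theorem \ref{t1} with $k=1$, $a_1 = 2m$ and $t_1 = \tau$. This gives, for each fixed $\tau$,
\begin{align*}
\sum_{D\in\mathcal{H}_{2g+1,q}}\big|L(\tfrac12+i\tau,\chi_D)\big|^{2m} \ll X(\log X)^{m^2}\,\Big|\zeta_A\Big(1+2i\tau+\tfrac1{\log X}\Big)\Big|^{m^2+m},
\end{align*}
since with one variable there is no off-diagonal $t_j \pm t_l$ product and the single surviving factor has exponent $a_1^2/4 + a_1/2 = m^2 + m$. Using Lemma \ref{mertenstype} (the second equality in \eqref{sumcosp}), $|\zeta_A(1+2i\tau+1/\log X)| \asymp \min(\log X, 1/\overline{2\tau\log q})$, so the bound becomes $X(\log X)^{m^2}\min(\log X, 1/\overline{2\tau\log q})^{m^2+m}$.

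The remaining step, and the only genuinely substantive one, is to integrate this over $t\in[0,2\pi]$, equivalently over $\tau$ near $0$ of length $2\pi/\log q$. The quantity $\overline{2\tau\log q}$ is comparable to $\tau\log q$ for $\tau$ in a neighborhood of $0$ of size $\asymp 1/\log q$ (the periodic minimum $\overline{\theta}$ behaves like $|\theta|$ near $0$), so $\min(\log X, 1/\overline{2\tau\log q})$ equals $\log X$ on a window of size $\asymp 1/(\log X\log q)$ about $\tau = 0$ and decays like $1/(\tau\log q)$ outside it. Converting back to $t = \tau\log q$, I get
\begin{align*}
\int_0^{2\pi}\min\Big(\log X,\frac1{\overline{2t}}\Big)^{m^2+m}\,\dif t \ll (\log X)^{m^2+m}\cdot\frac1{\log X} + \int_{1/\log X}^{\pi}\frac{\dif t}{t^{m^2+m}} \ll (\log X)^{m^2+m-1},
\end{align*}
where the tail integral converges at its lower endpoint to give the same order (here one uses $m \geq 3/2$ so $m^2 + m > 1$, making the tail dominated by its lower limit). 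Multiplying by $X(\log X)^{m^2}$ yields $X(\log X)^{2m^2+m-1}$.

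I expect the main obstacle to be a bookkeeping discrepancy: the target exponent in \eqref{finiteintest} is $2m^2 - m + 1$, whereas the naive computation above produces $2m^2 + m - 1$, which is larger when $m > 1$. Resolving this requires a more careful treatment of the $t$-integral — rather than bounding $\big(\int_0^{2\pi}|\mathcal{L}|\,\dif t\big)^{2m}$ by $\int|\mathcal{L}|^{2m}$ at the very start, one should split the $t$-integral into a short range near the singularities of $\zeta_A(1+2it+1/\log X)$ (where $|\mathcal{L}|$ is genuinely large, of size up to $(\log X)^{1/2+O(1)}$ on average) and a long range where the pointwise second-moment bound is only $\asymp \log X$, and then apply Hölder separately on each range with exponents chosen to save the extra powers of $\log X$. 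Concretely: on the long range one uses $\int_{\text{long}}|\mathcal{L}|\,\dif t \ll \big(\int|\mathcal{L}|^{2m}\big)^{1/(2m)}$ and $\sum_D(\int_{\text{long}}|\mathcal{L}|\,\dif t)^{2m} \ll \int_{\text{long}}\sum_D|\mathcal{L}|^{2m} \ll (\log X)^{m^2}\cdot\log X\cdot X = X(\log X)^{m^2+1}$, which is within budget once $m \geq 3/2$ forces $m^2 + 1 \leq 2m^2 - m + 1$; on the short range of length $\asymp 1/(\log X\log q)$ one uses Cauchy--Schwarz in $t$ to get $(\int_{\text{short}}|\mathcal{L}|\,\dif t)^{2m} \ll (\text{length})^{2m-1}\int_{\text{short}}|\mathcal{L}|^{2m}\,\dif t$, and the factor $(\text{length})^{2m-1} \asymp (\log X)^{-(2m-1)}$ precisely cancels the excess, giving $X(\log X)^{m^2 + 1 + 1 - (2m-1)} = X(\log X)^{m^2 - 2m + 3}$ from that part — one then checks this is also $\leq 2m^2 - m + 1$ for $m \geq 3/2$. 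Balancing the two contributions and tracking the arithmetic carefully is the crux; this is exactly the structure of the argument in \cite[Theorem 3]{Szab}, which I would follow.
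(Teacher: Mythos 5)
Your opening reduction and the $k=1$ application of Theorem \ref{t1} are fine, and you correctly diagnose that the naive Jensen bound only yields $X(\log X)^{2m^2+m-1}$. The genuine gap is in the proposed repair. Your long-range estimate is false: with the cutoff at $|t|\asymp 1/\log X$, the factor $\min\big(\log X, 1/\overline{2t}\big)^{m^2+m}$ does not integrate to $O(\log X)$ over the long range; its tail over $1/\log X\le t\le \pi/2$ alone contributes $\asymp \int_{1/\log X}^{\pi/2} t^{-(m^2+m)}\,\dif t \asymp (\log X)^{m^2+m-1}$, so the long-range term is again $X(\log X)^{2m^2+m-1}$, exceeding the target by $(\log X)^{2m-2}$. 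Moving the cutoff up to $|t|\asymp 1$ rescues the long range but destroys the short-range gain, since then the factor $(\mathrm{length})^{2m-1}$ is $O(1)$; no single cutoff works, because the excess accumulates from every scale $e^{j}/\log X$, $1\le j\ll\log\log X$, not from one window. (You also ignore the periodic singularities of $\overline{2t}$ at $t=\pi$ and $t=2\pi$; the paper removes these by restricting to $[0,\pi/2]$ and using the symmetry $e^{it}\mapsto -e^{-it}$.) Your short-range computation does contain the right mechanism --- the measure factor $(\mathrm{length})^{2m-1}$ penalizing windows where the symplectic factor is large --- but to close the argument along these lines one would have to apply it at every dyadic scale with a weighted H\"older in the scale parameter, and the figures you report ($X(\log X)^{m^2+1}$ and $X(\log X)^{m^2-2m+3}$) are not what these steps produce.

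Moreover, the concluding appeal to ``the structure of \cite[Theorem 3]{Szab}'' mischaracterizes that structure, which is also the paper's: one does not split the $t$-integral around the singular point and use single-point moments. Instead one writes $\big(\int|\mathcal{L}|\big)^{2m}$ as in \eqref{Lintdecomp} with $k$ auxiliary points $t_1,\dots,t_k$ (ultimately $k=2$) and a concentration point $v$ ranging over the region $\mathcal{D}$ nearest to $t_1$, decomposes according to the mutual distances $|t_1-v|$ and $|t_{i+1}-v|-|t_i-v|$ into the sets $\mathcal{C}_{l_0,\dots,l_{k-1}}$, applies H\"older as in \eqref{LintoverD}, and only then invokes the full multi-shift bound \eqref{mathcalLestimation}, whose off-diagonal factors $\min\big(\log X, 1/\overline{|\theta_j\pm\theta_l|}\big)$ decay like $\log X/e^{\max(\cdots)}$ on those sets; the measure of $\mathcal{C}_{l_0,\dots,l_{k-1}}$ and the factor $|\mathcal{B}_{l_0}|^{2m-k-1}$ then produce the exponent $2m^2-m+1$ in \eqref{firstcase}. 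Your proposal never uses Theorem \ref{t1} with more than one shift, so the decorrelation encoded in those off-diagonal $\zeta_A$ factors --- the very content the shifted-moment theorem was proved to supply --- is never exploited, and the bookkeeping that yields $2m^2-m+1$ is absent. As written, the proof does not go through.
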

\begin{proof}
As $|\mathcal{L}(e^{it}/\sqrt{q},\chi_D)\Big |=|\overline{\mathcal{L}(e^{i(2\pi-t)}/\sqrt{q}, \chi_D)}\Big |$, it follows that
\begin{align}
\label{finiteintest0}
\begin{split}
  \sum_{D \in \mathcal{H}_{2g+1,q}} \Big (\int\limits_{0}^{2\pi} \Big |\mathcal{L}(e^{it}/\sqrt{q},\chi_D)  \Big |\dif t\Big )^{2m}
 \ll & \sum_{D \in \mathcal{H}_{2g+1,q}}\Big ( \int\limits_{0}^{\pi} \Big |\mathcal{L}(e^{it}/\sqrt{q},\chi_D)  \Big | \dif t \Big )^{2m}.
\end{split}
\end{align}

  Moreover, the right-hand side of \eqref{finiteintest0} is
\begin{align}
\label{finiteintest1}
\begin{split}
\ll & \sum_{D \in \mathcal{H}_{2g+1,q}} \Big ( \int\limits_{0}^{\pi/2} \Big |\mathcal{L}(e^{it}/\sqrt{q},\chi_D)  \Big | \dif t\Big )^{2m}+\sum_{D \in \mathcal{H}_{2g+1,q}} \Big ( \int\limits_{\pi/2}^{\pi} \Big |\mathcal{L}(e^{it}/\sqrt{q},\chi_D)  \Big | \dif t\Big )^{2m} \\
 =& \sum_{D \in \mathcal{H}_{2g+1,q}} \Big ( \int\limits_{0}^{\pi/2} \Big |\mathcal{L}(e^{it}/\sqrt{q},\chi_D)  \Big | \dif t\Big )^{2m}+\sum_{D \in \mathcal{H}_{2g+1,q}} \Big ( \int\limits_{\pi/2}^{0} \Big |\mathcal{L}(e^{i(\pi-t)}/\sqrt{q},\chi_D)  \Big | \dif (\pi-t)\Big )^{2m} \\
 =& \sum_{D \in \mathcal{H}_{2g+1,q}} \Big ( \int\limits_{0}^{\pi/2} \Big |\mathcal{L}(e^{it}/\sqrt{q},\chi_D)  \Big | \dif t\Big )^{2m}+\sum_{D \in \mathcal{H}_{2g+1,q}} \Big ( \int\limits_{0}^{\pi/2} \Big |\mathcal{L}(-e^{-it}/\sqrt{q},\chi_D) \Big | \dif t \Big )^{2m}.
\end{split}
\end{align}

   Let $k \geq 1$ be a fixed integer. We treat the first sum of the last expression above by deducing via symmetry that for $2m \geq k+1$,
\begin{align}
\label{Lintdecomp}
    \Big ( \int\limits_{0}^{\pi} \Big |\mathcal{L}(e^{it}/\sqrt{q},\chi_D) \Big | \dif t \Big )^{2m}
      \ll \int\limits_{[0,\pi/2]^k}\prod_{a=1}^k|\mathcal{L}(e^{it_a}/\sqrt{q},\chi_D)| \cdot \bigg(\int\limits_{\mathcal{D} }|\mathcal{L}(e^{iv}/\sqrt{q},\chi_D)| \dif v \bigg)^{2m-k} \dif \mathbf{t},
\end{align}
where $\mathcal{D}=\mathcal{D}(t_1,\ldots,t_k)=\{ v\in [0,\pi/2]:|t_1-v|\leq |t_2-v|\leq \ldots \leq |t_k-v| \}$. \newline

  We let $\mathcal{B}_1=\big[-\frac{\pi}{2\log X},\frac{\pi}{2\log X}\big]$ and $\mathcal{B}_j=\big[-\frac{e^{j-1}\pi}{2\log X}, -\frac{e^{j-2}\pi}{2\log X}\big]
  \cup \big[\frac{e^{j-2}\pi}{2\log X}, \frac{e^{j-1}\pi}{2\log X}\big]$ for $2\leq j< \lfloor \log \log X\rfloor =:K$. We further denote
  $\mathcal{B}_K=[-\pi/2,\pi/2]\setminus \bigcup_{1\leq j<K} \mathcal{B}_j$. \newline

Observe that for any $t_1\in [0, \pi/2]$,  we have $\mathcal{D}\subset [0,\pi/2] \subset t_1+[-\pi/2,\pi/2]\subset \bigcup_{1\leq j\leq K} (t_1+\mathcal{B}_j)$. Thus
if we set $\mathcal{A}_j=\mathcal{B}_j\cap (-t_1+\mathcal{D})$, then $(t_1+\mathcal{A}_j)_{1\leq j\leq K}$ form a partition of $\mathcal{D}$.  From applying Hölder's inequality twice, emerges the bounds,
\begin{align}
\label{LintoverD}
\begin{split}
    & \bigg(\int\limits_{\mathcal{D}}|\mathcal{L}(e^{iv}/\sqrt{q},\chi_D)| \dif v\bigg)^{2m-k} \leq \bigg( \sum_{1\leq j\leq K} \frac{1}{j}\cdot  j \int\limits_{t_1+\mathcal{A}_j} |\mathcal{L}(e^{iv}/\sqrt{q},\chi_D)| \dif v \bigg)^{2m-k} \\
     \leq & \bigg(\sum_{1\leq j\leq K} j^{2m-k} \bigg( \int\limits_{t_1+\mathcal{A}_j} \big|\mathcal{L}(e^{iv}/\sqrt{q},\chi_D)\big| \dif v  \bigg)^{2m-k}\bigg)
     \bigg(\sum_{1\leq j\leq K } j^{-(2m-k)/(2m-k-1)} \bigg)^{2m-k-1} \\
     \ll & \sum_{1\leq j\leq K} j^{2m-k} \bigg( \int\limits_{t_1+\mathcal{A}_j} \big|\mathcal{L}(e^{iv}/\sqrt{q},\chi_D)\big| \dif v \bigg)^{2m-k} \\
     \leq & \sum_{1\leq j\leq K} j^{2m-k} |\mathcal{B}_j|^{2m-k-1} \int\limits_{t_1+\mathcal{A}_j} \big|\mathcal{L}(e^{iv}/\sqrt{q},\chi_D)\big|^{2m-k} \dif v.
\end{split}
\end{align}
For $\mathbf{t}=(t_1,\ldots,t_k)$, we write
$$\mathcal{L}(\mathbf{t},v)=\sum_{D \in \mathcal{H}_{2g+1,q}} \prod_{a=1}^k|\mathcal{L}(e^{it_a}/\sqrt{q},\chi_D)| \cdot |\mathcal{L}(e^{iv}/\sqrt{q},\chi_D)|^{2m-k}.$$
  We then deduce from \eqref{Lintdecomp} and \eqref{LintoverD} that
\begin{align*}
%%\label{Intbound}
\begin{split}
  \sum_{D \in \mathcal{H}_{2g+1,q}} \Big ( \int\limits_{0}^{\pi} \Big |\mathcal{L}(e^{it}/\sqrt{q},\chi_D)  \Big | \dif t\Big )^{2m} \ll &
    \sum_{1\leq l_0\leq K} l_0^{2m-k} |\mathcal{B}_{l_0}|^{2m-k-1} \int\limits_{[0,\pi/2]^k}\int\limits_{t_1+\mathcal{A}_{l_0}}\mathcal{L}(\mathbf{t},v) \dif v \dif \mathbf{t}  \\
     \ll &  \sum_{1\leq l_0, l_1, \ldots l_{k-1}\leq K} l_0^{2m-k} |\mathcal{B}_{l_0}|^{2m-k-1} \int\limits_{\mathcal{C}_{l_0,l_1, \cdots, l_{k-1}}} \mathcal{L}(\mathbf{t},v) \dif v \dif \mathbf{t},
\end{split}
\end{align*}
where
$$\mathcal{C}_{l_0,l_1, \cdots, l_{k-1}}=\{(t_1,\ldots,t_k,v)\in [0,\pi/2]^{k+1}: v\in t_1+ \mathcal{A}_{l_0},\, |t_{i+1}-v|-|t_i-v|\in \mathcal{B}_{l_i},1 \ 1 \leq i \leq k-1\}.$$

The volume of the region $\mathcal{C}_{l_0,l_1, \cdots, l_{k-1}}$ is $\ll e^{l_0+l_1+\cdots+l_{k-1}}/(\log X)^k$. Also,
by the definition of $\mathcal{C}_{l_0,l_1, \cdots, l_{k-1}}$ and the observation that $t_i$, $v \geq 0$, $1\leq i \leq k$, we have $e^{l_0}/\log X \ll |t_1-v|\ll |t_1+v|\ll 1$ so that $w(|t_1\pm v|)\ll \log X/e^{l_0}$, where we define, for simplicity, $w(t)=\min (\log X, 1/\overline{|t|})$. We deduce from the definition of $\mathcal{A}_j$ that $|t_2-v|\geq |t_1-v|$, so that $1 \gg |t_2+v| \gg |t_2-v|= |t_1-v|+(|t_2-v|-|t_1-v|)\gg  e^{l_0}/\log X + e^{l_1}/\log X$, which implies that $w(|t_2\pm v|)\ll \log X/e^{\max(l_0,l_1) }$.
Similarly, $w(|t_i\pm v|)\ll \log X /e^{\max(l_0,l_1,\ldots, l_{i-1}) }$ for any $1 \leq i \leq k$.
Moreover, we have $\sum^{j-1}_{s=i}(|t_{s+1}-v|-|t_s-v|) \leq |t_j-t_i| \leq |t_j+t_i|$ for any $1 \leq i < j \leq k$, so that we have $w(|t_{j}\pm t_i|)\ll \log X /e^{\max(l_i,\ldots, l_{j-1} ) }$.
We also bound $w(|2t_i|), 1\leq i \leq k$ and $w(|2u|)$ trivially by $\log X$. \newline

  Observe that, upon setting $u=q^{-1/2}e^{i\theta}$ in the function $\mathcal{L}(u,\chi_D)$ defined in \eqref{Ludef},  we have that
$$ \mathcal{L} \Big( \frac {e^{i\theta}}{\sqrt{q}},\chi_D \Big) = L\Big(\frac 12- \frac {\theta i}{\log q}, \chi_D \Big).$$
 Using this in Theorem \ref{t1}, we readily deduce a restatement of the theorem in terms of $\mathcal{L}(\frac {u}{\sqrt{q}},\chi_D)$ that asserts under the same notation as Theorem \ref{t1},
\begin{align}
\label{mathcalLestimation}
\begin{split}
\sum_{D\in\mathcal{H}_{2g+1,q}} & \Big |\mathcal{L} \Big( \frac {e^{i\theta_1}}{\sqrt{q}},\chi_D \Big) \Big|^{a_1} \cdots \Big|\mathcal{L}\Big(\frac {e^{i\theta_{2k}}}{\sqrt{q}},\chi_D \Big) \Big|^{a_{k}} \\
 \ll & X(\log X)^{(a_1^2+\cdots +a_{k}^2)/4}  \prod_{1\leq j<l \leq k} \min \Big( \log X, \frac{1}{\overline{|\theta_j-\theta_l|}} \Big)^{a_ja_l/2}\min \Big( \log X, \frac {1}{\overline{ |\theta_j+\theta_l|}} \Big)^{a_ja_l/2} \\
 & \hspace*{2cm} \times \prod_{1\leq j\leq k} \min \Big( \log X, \frac {1}{\overline{|2\theta_j|}} \Big)^{a^2_j/4+a_j/2}.
\end{split}
\end{align}
 Here the implied constant depends on $k$ and $a_j$, but not on $X$ or the $t_j$'s. \newline

  It from \eqref{mathcalLestimation} that for $(t_1,\ldots,t_k,v)\in \mathcal{C}_{l_0,l_1, \cdots, l_{k-1}}$,
\begin{align*}
   L(\mathbf{t},v) \ll & X(\log X)^{((2m-k)^2+k)/4+(2m-k)^2/4+(2m-k)/2+3k/4}
     \bigg(\prod^{k-1}_{i=0}\frac{\log X}{e^{ \max(l_0,l_1,\ldots, l_{i}) }} \bigg)^{2m-k}
     \bigg(\prod^{k-1}_{i=1} \prod^{k}_{j=i+1}\frac{\log X}{e^{\max(l_i,\ldots, l_{j-1} ) } } \bigg) \\
     = & X(\log X)^{m(2m+1)}\exp\Big( -(2m-k)\sum^{k-1}_{i=0}\max(l_0,l_1,\ldots, l_{i})-\sum^{k-1}_{i=1} \sum^{k}_{j=i+1}\max(l_i,\ldots, l_{j-1} )\Big).
\end{align*}

 Moreover,  we have $ |\mathcal{B}_{l_0}|\ll e^{l_0}/\log X$, so that, for $k \geq 2$ and $2m \geq k+1$,
\begin{align}
\label{firstcase}
\begin{split}
      &  \sum_{D \in \mathcal{H}_{2g+1,q}}\Big ( \int\limits_{0}^{\pi} \Big |\mathcal{L}(e^{it}/\sqrt{q},\chi)  \Big | \dif t\Big )^{2m}
\ll \sum_{\substack{1\leq l_0 \leq K \\ 1\leq l_1, \ldots l_{k-1}\leq K}}  l_0^{2m-k} |\mathcal{B}_{l_0}|^{2m-k-1} \int\limits_{\mathcal{C}_{l_0,l_1, \cdots, l_{k-1}}} \mathcal{L}(\mathbf{t},u) \dif u \dif \mathbf{t} \\
    \ll & X(\log X)^{2m^2-m+1} \\
    &  \times \sum_{\substack{1\leq l_0 \leq K \\ 1\leq l_1, \ldots l_{k-1}\leq K}}  l_0^{2m-k}\exp\Big( (2m-k-1)l_0+\sum^{k-1}_{i=0}l_i-(2m-k)\sum^{k-1}_{i=0}\max(l_0,l_1,\ldots, l_{i})-\sum^{k-1}_{i=1} \sum^{k}_{j=i+1}\max(l_i,\ldots, l_{j-1} )\Big) \\
    = & X(\log X)^{2m^2-m+1}  \\
    &  \times \sum_{\substack{1\leq l_0 \leq K \\ 1\leq l_1, \ldots l_{k-1}\leq K}}  l_0^{2m-k}\exp\Big( -(2m-k)\sum^{k-1}_{i=1}\max(l_0,l_1,\ldots, l_{i})-\sum^{k-1}_{i=1} \sum^{k}_{j=i+2}\max(l_i,\ldots, l_{j-1} )\Big) \\
    \ll &   X(\log X)^{2m^2-m+1} \sum_{\substack{1\leq l_0 \leq K \\ 1\leq l_1, \ldots l_{k-1}\leq K}}  l_0^{2m-k}\exp\Big( -(2m-k)\sum^{k-1}_{i=1}\frac {\sum^{i}_{s=0}l_s}{i+1}-\sum^{k-1}_{i=1} \sum^{k}_{j=i+2}\frac {\sum^{j-1}_{s=i}l_s}{j-i}\Big)  \\
    \ll &   X(\log X)^{2m^2-m+1}.
\end{split}
\end{align}
We now set $k=2$ to see that the above estimation holds for $m\geq 3/2$. \newline

Note that the estimation given in \eqref{mathcalLestimation}  is still valid with $\theta_i$ replaced by $\pi-\theta_i$ on the left-hand side of \eqref{mathcalLestimation} while keeping
  $\theta_j, \theta_l$ intact on the right-hand side of \eqref{mathcalLestimation}. Using this, one checks that our arguments above can be used to show that
\begin{align}
\label{Intbound1}
\begin{split}
    &  \sum_{D \in \mathcal{H}_{2g+1,q}}\Big ( \int\limits_{0}^{\pi} \Big |\mathcal{L}(-e^{it}/\sqrt{q},\chi_D) \Big | \dif t \Big )^{2m} \ll X(\log X)^{2m^2-m+1}.
\end{split}
\end{align}
   We then deduce from \eqref{finiteintest0}, \eqref{finiteintest1}, \eqref{firstcase} and \eqref{Intbound1} that \eqref{finiteintest} holds. This completes the proof of the proposition.
\end{proof}

\vspace*{.5cm}

\noindent{\bf Acknowledgments.}  P. G. is supported in part by NSFC grant 12471003 and L. Z. by the FRG Grant PS71536 at the University of New South Wales.  The authors would like to thank the anonymous referee for his/her meticulous inspection of the paper and many valuable suggestions.

\bibliography{biblio}
\bibliographystyle{amsxport}

\end{document}